\newtheorem{theorem}{Theorem}[section]
\newtheorem{lemma}[theorem]{Lemma}
\newtheorem{proposition}[theorem]{Proposition}
\theoremstyle{definition}
\newtheorem{definition}[theorem]{Definition}
\theoremstyle{remark}
\newtheorem{remark}[theorem]{Remark}
\numberwithin{equation}{section}
\begin{document}

\title[Stability]{Balanced metrics and Chow stability of projective bundles over Riemann Surfaces}%
\author{Reza SEyyedali}%
\address{Mathematics Department, University of California, Irvine}%
\email{rseyyeda@math.uci.edu}%

\subjclass[2000]{Primary 32Q15; Secondary 53C07}



\begin{abstract}

In 1980, I. Morrison proved that slope stability of a vector
bundle of rank $2$ over a compact Riemann surface implies Chow
stability of the projectivization  of the bundle with respect to
certain polarizations. We generalized Morrison's result to higher rank vector bundles over compact
algebraic manifolds of arbitrary dimension that admit constant
scalar curvature metric and have discrete automorphism group. In this article, we give a simple proof for  polarizations  $\mathcal{O}_{\mathbb{P}E^*}(d)\otimes \pi^* L^k$, where $d$ is a positive integer, $k \gg 0$ and the base manifold is a compact Riemann surface of genus $g \geq 2$.

\end{abstract}

\maketitle

\section{Introduction}

In \cite{M}, Morrison proved that for the
projectivization of a rank two holomorphic vector bundle over a
compact Riemann surface, Chow stability is equivalent to the
stability of the bundle. In \cite{S}, We generalized one direction of
Morrison's result for higher rank vector bundles over compact
algebraic manifolds of arbitrary dimension that admit constant
scalar curvature metric and have discrete automorphism group (\cite{S}).

Let $X$ be a compact complex manifold
of dimension $m$ and $\pi : E\rightarrow X$ be a holomorphic vector
bundle of rank $r$ with dual bundle $E^*$. This gives a
holomorphic fibre bundle $\mathbb{P}E^*$ over $X$ with fibre
$\mathbb{P}^{r-1}$. One can pull back the vector bundle $E$ to
$\mathbb{P}E^*$. We denote the tautological line bundle on
$\mathbb{P}E^*$ by $\mathcal{O}_{\mathbb{P}E^*}(-1)$ and its dual
by $\mathcal{O}_{\mathbb{P}E^*}(1)$. Let $L \rightarrow X$ be an
ample line bundle on $X$ and $\omega \in 2\pi  c_{1}(L)$ be a
K\"ahler form. Since $L$ is ample, there is an integer $k_{0} $ so
that for any $k\geq k_{0}$, $\mathcal{O}_{\mathbb{P}E_{k}^*}(1)$
is very ample over $\mathbb{P}E_{k}^*$, where $E_{k}=E\otimes
L^{\otimes k}$. Note that there is a canonical isomorphism $\mathbb{P}E_{k}^* \cong \mathbb{P}E^*$
and $\mathcal{O}_{\mathbb{P}E_{k}^*}(1)\cong
\mathcal{O}_{\mathbb{P}E^*}(1) \otimes \pi^* L^{k}$. The main theorem of \cite{S}
is the following:

\begin{theorem}\label{thm1}
Suppose that $Aut(X)$ is discrete and $X$ admits a constant scalar
curvature K\"ahler metric in the class of $2\pi  c_{1}(L)$. If $E$
is Mumford stable, then 
$$(\mathbb{P}E^*,\mathcal{O}_{\mathbb{P}E^*}(1)\otimes \pi^* L^k)$$
is Chow stable for $k \gg k_{0}$.

\end{theorem}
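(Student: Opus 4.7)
The plan is to reinterpret Chow stability through balanced metrics, in the sense of Zhang, Luo and Wang: the polarized variety $(\mathbb{P}E^*, \mathcal{O}_{\mathbb{P}E^*}(1)\otimes \pi^*L^k)$ is Chow polystable if and only if it admits a \emph{balanced} metric, namely a Hermitian metric on the polarization whose associated Bergman function is constant on $\mathbb{P}E^*$. The argument then splits into two parts: produce a balanced metric for each sufficiently large $k$, and then upgrade Chow polystability to Chow stability by observing that the automorphism group of the polarized pair is finite, which follows from discreteness of $Aut(X)$ together with stability of $E$ (every holomorphic automorphism of $\mathbb{P}E^*$ descends to $X$ since $\mathrm{End}(E)=\mathbb{C}\cdot\mathrm{Id}$).

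For the first part, I would start from the Hermite--Einstein metric $h$ on $E$ supplied by Donaldson--Uhlenbeck--Yau, together with the cscK metric $\omega \in 2\pi c_1(L)$. These canonically induce a Hermitian metric $H_k$ on $\mathcal{O}_{\mathbb{P}E^*}(1)\otimes\pi^*L^k$. Via the projection formula isomorphism $H^0(\mathbb{P}E^*, \mathcal{O}_{\mathbb{P}E^*}(1)\otimes\pi^*L^k) \cong H^0(X, E\otimes L^k)$, the Bergman function of $H_k$ at $[\xi]\in \mathbb{P}(E_x^*)$ takes the shape $\langle B_k(x)\xi,\xi\rangle/|\xi|_h^2$, where $B_k \in \Gamma(\mathrm{End}(E))$ is the endomorphism-valued Bergman kernel of $E\otimes L^k$ on $X$. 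The Tian--Zelditch--Catlin--Wang asymptotic expansion yields $B_k = k^m\,\mathrm{Id}_E + a_1(x)\, k^{m-1} + O(k^{m-2})$, where $a_1$ is a universal combination of the scalar curvature of $\omega$ and the mean curvature of $h$. The Hermite--Einstein condition forces the mean curvature to be a constant scalar multiple of $\mathrm{Id}_E$, and the cscK condition makes the scalar curvature of $\omega$ constant; hence the Bergman function of $H_k$ on $\mathbb{P}E^*$ is constant modulo $O(k^{-2})$, i.e.\ $H_k$ is \emph{approximately balanced}.

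To pass from approximate to exact balance I would iterate a Donaldson-type perturbation scheme: at order $j$ in $k^{-1}$, correct the metric by a factor $e^{\psi_j/k^j}$, where $\psi_j$ solves a linear elliptic equation whose principal part is a fibred Lichnerowicz-type operator $\mathcal{L}_k$ on $(\mathbb{P}E^*, H_k)$. The kernel of $\mathcal{L}_k$ consists of real parts of holomorphy potentials on $\mathbb{P}E^*$; by stability of $E$ every such potential descends to $X$, and by discreteness of $Aut(X)$ it must be a constant. Consequently $\mathcal{L}_k$ is invertible on the $L^2$-complement of the constants, and an implicit function theorem then produces an exact balanced metric $H_k e^{\psi_k}$ for all $k \gg k_0$, from which Chow stability follows. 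The main technical obstacle will be securing bounds on $\mathcal{L}_k^{-1}$ that are uniform as $k\to\infty$: under the natural normalization the polarized total space collapses fibrewise onto $X$, so the fibre and base directions must be weighted differently, and the Fredholm theory of $\mathcal{L}_k$ has to track this collapse. Setting up weighted Sobolev spaces on $\mathbb{P}E^*$ adapted to the fibre--base decomposition, and carrying out the corresponding uniform analysis, is where the bulk of the work lies.
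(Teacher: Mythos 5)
Your overall strategy (balanced $\Leftrightarrow$ Chow stable, an approximately balanced metric built from the Hermite--Einstein and cscK data, then perturbation) is the right framework, but the final step as written has a genuine gap. Passing from an almost balanced metric to an exactly balanced one is not an implicit function theorem for a conformal factor $e^{\psi_k}$ solved against a fibred Lichnerowicz operator on $\mathbb{P}E^*$: the balanced condition is the vanishing of a \emph{finite-dimensional} moment map $\mu_{D}$ on the space of ordered bases of $H^{0}(\mathbb{P}E^*,\mathcal{O}_{\mathbb{P}E^*}(1)\otimes\pi^*L^k)$ (an $N_k\times N_k$ matrix equation), and a general conformal perturbation of $H_k$ does not stay inside the family of Fubini--Study metrics among which the balanced metric must be found. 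An elliptic iteration on the total space can at best produce metrics that are almost balanced to arbitrarily high order $q$ in $k^{-1}$; to finish one needs the finite-dimensional argument (flowing along the gradient of $|\mu_{D}|^2$ with a Lojasiewicz-type inequality, equivalently a lower bound, polynomial in $k^{-1}$, on the smallest eigenvalue of $P_k^*P_k$ acting on $\mathfrak{su}(N_k)$), and establishing that uniform bound --- where discreteness of $\mathrm{Aut}(X)$ and simplicity of $E$ enter quantitatively --- is the actual crux. The paper outsources precisely this step to \cite[Theorem 4.6]{S}, adapting Phong--Sturm; your sketch does not contain it.

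Second, your identification of the Bergman function of $(\mathbb{P}E^*,\mathcal{O}_{\mathbb{P}E^*}(1)\otimes\pi^*L^k,H_k)$ with $\langle B_k(x)\xi,\xi\rangle/|\xi|_h^2$ presumes that the $L^2$ inner product on $H^0(\mathbb{P}E^*,\cdot)$ taken against $dvol_{H_k}$ pushes down to a \emph{constant} multiple of the $L^2$ inner product on $H^0(X,E\otimes L^k)$. This holds exactly only because of the fibrewise identity of Lemma \ref{lem2} combined with $\omega_k^r=r(\mu+k)\,\omega_0^{r-1}\wedge\omega_{\infty}$, which uses both the Hermitian--Einstein equation and $\dim X=1$; over a higher-dimensional base --- which is what Theorem \ref{thm1} asserts --- nonconstant corrections appear at each order and absorbing them is the bulk of \cite{S}. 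Note also that the present paper's proof avoids your adiabatic elliptic analysis entirely: it starts from Wang's \emph{exactly} balanced metrics on $E\otimes L^k$ (more generally $\mathrm{Sym}^dE\otimes L^k$) over the curve, uses that all coefficients of the Tian--Yau--Zelditch expansion are constant on a constant-curvature Riemann surface to get $O(k^{-\infty})$ convergence to the Hermitian--Einstein metric, deduces that the induced Fubini--Study metrics upstairs are almost balanced to infinite order, and only then runs the finite-dimensional perturbation. So you should either restrict to curves and replace the elliptic implicit function theorem by the moment-map step, or supply the full weighted/adiabatic analysis of \cite{S} and Hong.
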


One of the earliest results in this spirit is the work of Burns
and De Bartolomeis in \cite{BD}. They constructed a ruled surface
which does not admit any extremal metric in certain cohomology
class. In \cite{H1}, Hong proved that there are constant scalar
curvature K\"ahler metrics on the projectivization of stable
bundles over curves. In \cite{H2} and \cite{H3}, he generalized
this result to higher dimensions with some extra assumptions.
Combining Hong's results with Donaldson's,
$(\mathbb{P}E^*,\mathcal{O}_{\mathbb{P}E^*}(n) \otimes \pi^*L^m)$ is Chow
stable for $m,n \gg 0$ when the bundle $E$ is stable. Note that it
differs from our result, since it implies the Chow stability of
$(\mathbb{P}E^*,\mathcal{O}_{\mathbb{P}E_{m}^*}(n))$ for $n, m$ big
enough.

In \cite{RT}, Ross and Thomas developed the notion of slope
stability for polarized algebraic manifolds. As one of the
applications of their theory, they proved that if
$(\mathbb{P}E^*,\mathcal{O}_{\mathbb{P}E^*}(1)\otimes \pi^* L^k)$
is slope semi-stable for $k \gg 0$, then $E$ is a slope semistable
bundle and $(X,L)$ is a slope semistable manifold.Again note that
they look at stability of $\mathbb{P}E^*$ with respect to
polarizations $\mathcal{O}_{\mathbb{P}E_{m}^*}(n)$ for $n$ big
enough. For the case of one dimensional base, however they showed
stronger results. In this case they proved that if
$(\mathbb{P}E^*,\mathcal{O}_{\mathbb{P}E^*}(1)\otimes \pi^* L)$ is
slope (semi, poly) stable for any ample line bundle $L$, then $E$
is a slope (semi, poly) stable bundle.

In order to prove Theorem \ref{thm1} we used the concept of
balanced metrics. Combining the results of Luo, Phong, Sturm, Wang
and Zhang on the relation between balanced metrics and stability, we proved the following.

\begin{theorem}(\cite{S})\label{thm2}
Suppose that $Aut(X)$ is discrete and $X$ admits a constant scalar
curvature K\"ahler metric in the class of $2\pi  c_{1}(L)$. If $E$
is Mumford stable, then 
$$(\mathbb{P}E^*,\mathcal{O}_{\mathbb{P}E^*}(1)\otimes \pi^* L^k)$$
admits balanced metrics for $k \gg 0$.

\end{theorem}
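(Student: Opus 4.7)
The plan is to produce the balanced metric by a Donaldson-type quantization: build an extremely accurate approximate balanced metric out of a Hermite--Einstein metric on $E$ and the cscK metric on $X$, then perturb to a genuine balanced one. The starting observation is that, via the canonical isomorphism $H^0(\mathbb{P}E^*, \mathcal{O}(1)\otimes \pi^*L^k)\cong H^0(X, E_k)$, a balanced metric on $(\mathbb{P}E^*, \mathcal{O}(1)\otimes \pi^*L^k)$ corresponds to a Hermitian metric on $E_k=E\otimes L^k$ whose endomorphism-valued Bergman kernel
\[
B(H_k)\;=\;\sum_\alpha s_\alpha\otimes s_\alpha^{\,*}\;\in\;\Gamma(X,\mathrm{End}(E_k))
\]
is a constant multiple of the identity, where $\{s_\alpha\}$ is an $L^2$-orthonormal basis. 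Thus the problem is reduced from asymptotics on the total space $\mathbb{P}E^*$ (where the polarization scales only in the base direction) to a vector-bundle Bergman-kernel problem on $X$.

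First I would invoke Donaldson--Uhlenbeck--Yau to obtain a Hermite--Einstein metric $H$ on $E$ with respect to $\omega$, and fix a Hermitian metric $h_L$ on $L$ with curvature $-i\omega$. Taking $H\otimes h_L^{\otimes k}$ as the initial candidate, I would appeal to the Catlin--Tian--Zelditch expansion in the vector-bundle form established by Wang and Ma--Marinescu: the leading term of $B_k$ is $k^m\,\mathrm{Id}$, and the subleading coefficient reduces to a constant multiple of the identity precisely because $H$ is Hermite--Einstein and $\omega$ has constant scalar curvature.

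Next I would correct this initial guess at each order in $k^{-1}$. Writing $H_k^{(N)}=H\exp\bigl(\sum_{j=1}^N k^{-j}\eta_j\bigr)$, I would inductively solve for $\eta_j\in \Gamma(\mathrm{End}(E))$ so that $B(H_k^{(N)}\otimes h_L^{\otimes k})$ is a constant multiple of the identity modulo $O(k^{m-N-1})$. The linearization at each stage is an elliptic operator on $\Gamma(\mathrm{End}(E))$ coupling a Lichnerowicz-type operator on $X$ with the linearized Hermite--Einstein operator; its kernel reduces to $\ker\mathrm{Lich}_\omega\oplus H^0(\mathrm{End}(E))$. Since $\mathrm{Aut}(X)$ is discrete and $E$ is simple (by stability), this kernel consists only of constants, so the operator is invertible on the orthogonal complement uniformly in $k$. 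Finally, an implicit function / fixed-point argument in the style of Donaldson promotes an $N$-th order approximate balanced metric (for $N$ chosen to dominate the operator-norm bound on the inverse linearization) to an honest balanced metric for $k\gg 0$.

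The main obstacle is the inductive perturbation step. One must verify that the vector-bundle Bergman expansion faithfully encodes the scalar Bergman function on the total space $\mathbb{P}E^*$ under the mixed base/fiber scaling of $\mathcal{O}(1)\otimes \pi^*L^k$, and that the coupled base-plus-bundle linearization is invertible with $k$-uniform bounds after moding out by the genuine infinitesimal automorphisms. It is precisely here that the hypotheses ``$\mathrm{Aut}(X)$ discrete'' and ``$E$ Mumford stable'' are used, mirroring Donaldson's use of discrete automorphisms in the scalar case.
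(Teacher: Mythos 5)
Your opening reduction is where the argument breaks down: a balanced metric on the polarized manifold $(\mathbb{P}E^*,\mathcal{O}_{\mathbb{P}E^*}(1)\otimes\pi^*L^k)$ is \emph{not} equivalent to a Hermitian metric on $E_k=E\otimes L^k$ whose endomorphism-valued Bergman kernel on $X$ is a constant multiple of the identity. The two notions of ``balanced'' use different volume forms: the bundle condition is a Gram-matrix condition with respect to a fixed form $\omega^n/n!$ on $X$, whereas the polarized-manifold condition requires $\int_{\mathbb{P}E^*}\langle \hat s_i,\hat s_j\rangle_{\widehat{H_k}}\,\omega_{\widehat{H_k}}^{\,n+r-1}/(n+r-1)!$ to be a multiple of $\delta_{ij}$, where $\omega_{\widehat{H_k}}$ is the curvature of the induced metric on $\mathcal{O}(1)\otimes\pi^*L^k$ over the $(n+r-1)$-dimensional total space. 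Fiber integration of $\omega_{\widehat{H_k}}^{\,n+r-1}$ produces a density on $X$ that depends on the curvature of $H_k$ itself; it collapses to a constant multiple of $\omega^n/n!$ only when $H_k$ is exactly Hermite--Einstein, and the balanced metrics on $E\otimes L^k$ are only asymptotically so. The paper is explicit that the two balanced notions are different (see the remark contrasting Definitions \ref{def1n} and \ref{def2n}). You flag this as something ``to verify'' at the end, but it is not a verification that succeeds: even a perfect solution of your inductive scheme on $X$ --- which in any case essentially reproves Wang's theorem that stable bundles admit balanced metrics, a result the paper simply quotes --- yields only an \emph{almost} balanced metric on $\mathbb{P}E^*$, with a nonzero trace-free error matrix $M^{(k)}$.

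The proof therefore needs two further ingredients absent from your proposal. First, an estimate on the error: using the $O(k^{-\infty})$ convergence of the bundle-balanced metrics to the Hermite--Einstein metric (this is where the cscK hypothesis and the Bergman expansion enter), the fiberwise computation shows $\|M^{(k)}\|_{op}=O(k^{-q})$ for every $q$. Second, and this is the substantive step, one must perturb the almost balanced basis to an exactly balanced one. This is not an implicit-function argument for an elliptic operator on $\Gamma(X,\mathrm{End}(E))$: it is a finite-dimensional moment-map problem on the space of ordered bases of $H^0(\mathbb{P}E^*,\mathcal{O}(1)\otimes\pi^*L^k)$ under the $SU(N)$-action, and closing it requires a uniform lower bound on the derivative of the moment map (a Lojasiewicz-type inequality in the style of Phong--Sturm) so that the gradient flow of $|\mu_{D}|^2$ starting at the almost balanced point converges to a zero. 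The hypotheses ``$\mathrm{Aut}(X)$ discrete'' and ``$E$ stable'' do enter there, through the absence of nontrivial holomorphic vector fields on $\mathbb{P}E^*$, but via this global moment-map estimate rather than via invertibility of a Lichnerowicz-type operator coupled to the Hermite--Einstein linearization.
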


In this paper, we give another proof of Theorem \ref{thm1} in the case of one dimensional base $X$. The proof is simple in this case and can be generalized to polarizations $\mathcal{O}_{\mathbb{P}E^*}(d)\otimes \pi^* L^k$ for any positive integer $d$ and $k \gg 0$. The main theorem of this paper is the following.

\begin{theorem}\label{thm2b}

Let $X$ be a compact Riemann surface of genus $g \geq 2$ and
$E\rightarrow X$ be a holomorphic vector bundle on $X$. Let $d$ be a positive integer. If $E$ is Mumford stable, then
$$(\mathbb{P}E^*,\mathcal{O}_{\mathbb{P}E^*}(d)\otimes \pi^* L^k)$$ admits balanced
metrics $g_{k}$ for $k \gg 0$.

\end{theorem}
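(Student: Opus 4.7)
The plan is to reduce the problem to a Bergman kernel asymptotics question for a vector bundle on $X$ and then correct an approximately balanced metric by an implicit function theorem, generalizing the argument behind Theorem \ref{thm2}. First I would use the projection formula $\pi_* \mathcal{O}_{\mathbb{P}E^*}(d) \cong S^d E$ to identify
\begin{equation*}
H^0\bigl(\mathbb{P}E^*, \mathcal{O}_{\mathbb{P}E^*}(d) \otimes \pi^* L^k\bigr) \cong H^0\bigl(X, S^d E \otimes L^k\bigr),
\end{equation*}
so that the balanced-metric problem on $\mathbb{P}E^*$ becomes a Bergman-kernel problem for $S^d E \otimes L^k$ on the curve.

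Since $g \geq 2$, $Aut(X)$ is discrete and $X$ carries the hyperbolic metric, which has constant scalar curvature. Mumford stability of $E$ gives a Hermite--Einstein metric $h$ on $E$ by Narasimhan--Seshadri, and the induced metric $S^d h$ is Hermite--Einstein on $S^d E$. As candidate metric upstairs I would take the Fubini--Study metric on $\mathcal{O}_{\mathbb{P}E^*}(1)$ built from $h$, raised to the $d$-th power, tensored with the $k$-th power of the hermitian metric $h_L$ on $L$ whose curvature is $\omega$. Fibrewise integration over $\mathbb{P}^{r-1}$ then identifies, up to an overall universal constant, the $L^2$ inner product on $H^0(\mathbb{P}E^*, \mathcal{O}_{\mathbb{P}E^*}(d) \otimes \pi^* L^k)$ induced by this metric with the $L^2$ inner product on $H^0(X, S^d E \otimes L^k)$ induced by $S^d h \otimes h_L^k$ and $\omega$.

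The heart of the proof is the asymptotic expansion of the Bergman kernel of $(S^d E \otimes L^k, S^d h \otimes h_L^k)$ as $k \to \infty$, in the form established for twisted bundles by Catlin, Lu, Tian, Zelditch and Wang. Because $\dim X = 1$, the Hermite--Einstein condition on $S^d h$ forces its endomorphism curvature to be a constant multiple of $\mathrm{id}$ times $\omega$, and the hyperbolic metric has constant scalar curvature; together these collapse the first two terms of the expansion to scalar constants. Transporting the expansion back upstairs via the fibre integral, the Bergman function of $\mathcal{O}_{\mathbb{P}E^*}(d) \otimes \pi^* L^k$ with respect to the candidate metric is then constant modulo an error of order $O(1/k)$ in $C^{\ell}$, yielding an approximately balanced metric.

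The remaining task, and the main technical step, is to perturb this approximately balanced metric to a genuine one via an implicit function theorem as in \cite{S}: one must bound the inverse of the linearization of the balancing map uniformly in $k$. The bound is available because $Aut(X)$ is discrete and stability of $E$ forces $Aut(E) = \mathbb{C}^*$, so the kernel of the linearization consists only of scalar automorphisms, which are precisely the ones quotiented out in the definition of balanced. The $\dim X = 1$ assumption makes this step substantially cleaner than the higher-dimensional version treated in \cite{S}; granted the uniform bound, the approximate solution is perturbed to an exact balanced metric $g_k$ for all $k \gg 0$.
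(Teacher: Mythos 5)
Your overall architecture (push the problem down to $S^d E\otimes L^k$ on $X$ via $\pi_*\mathcal{O}_{\mathbb{P}E^*}(d)\cong S^dE$, build an approximately balanced metric from the Hermite--Einstein metric, then perturb) matches the paper's, but there is a genuine gap in the quantitative part, and it is exactly the part the paper is organized around. Your construction only produces an approximately balanced metric with error $O(1/k)$, and you then assert that the inverse of the linearization of the balancing map is bounded \emph{uniformly} in $k$. That uniform bound is not available: in the balanced-metric (finite-dimensional moment map) setting the relevant lower bound on the derivative of the moment map degenerates polynomially in $k$, which is precisely why the perturbation scheme of \cite{S} (Theorem 4.6 there, invoked at the end of the paper's proof) requires the approximate solution to be accurate to order $O(k^{-q})$ for a large $q$ --- in fact the paper arranges $O(k^{-\infty})$. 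With an $O(1/k)$ error and the bounds that actually hold, the correction step does not close.

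The paper gets the needed accuracy by two moves you omit. First, it does not work directly with the Hermite--Einstein metric $S^d h_\infty\otimes g^k$: it invokes Wang's theorem to produce \emph{exactly} balanced metrics $H^{(k)}$ on the stable bundle $\mathrm{Sym}^d E\otimes L^k$, so that the basis $\{s_i^{(k)}\}$ is exactly $L^2$-orthonormal downstairs and $\sum_i|\widehat{s_i^{(k)}}|^2\equiv 1$ exactly upstairs (part of the definition of almost balanced). Second, it proves (Theorem \ref{thm6}, via Lemmas \ref{lem4} and \ref{lem5}) that $H^{(k)}\otimes g^{-k}\to \mathrm{Sym}^d h_\infty$ at rate $O(k^{-\infty})$, using the observation that on a Riemann surface with constant-curvature $\omega_\infty$ and a Hermite--Einstein bundle metric \emph{all} coefficients of the Bergman expansion are constant endomorphisms --- not just the first two, as you use. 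Proposition \ref{prop7} then converts this $O(k^{-\infty})$ closeness into an $O(k^{-\infty})$ bound on the off-diagonal Gram matrix upstairs. If you want to salvage your route, you must (i) upgrade your $O(1/k)$ to $O(k^{-q})$ for arbitrary $q$ by using the constancy of all expansion coefficients together with an exact normalization of the basis (e.g.\ via Wang's balanced metrics), and (ii) replace the claimed uniform invertibility by the actual polynomially degenerating estimate and check the resulting inequality, which is what the citation to \cite[Theorem 4.6]{S} accomplishes. Your identification of the relevant automorphism condition is essentially right, though the condition that matters for the perturbation is that $\mathbb{P}E^*$ itself has no nonzero holomorphic vector fields (which follows from $g\geq 2$ and stability of $E$), not merely $\mathrm{Aut}(E)=\mathbb{C}^*$.
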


The Hitchin-Kobayashi correspondence implies that the stable bundle $E$ admits a Hermitian-Einstein metric $h_{\infty}$. A simple calculation shows that the Hermitian metric $\textrm{Sym}^d h_{\infty}$ on $\textrm{Sym}^d E$ is Hermitian-Einstein. Therefore the vector bundle $\textrm{Sym}^d E$ is stable. By a theorem of Wang , we know that there exist balanced metrics $H^{(k)}$ on $\textrm{Sym}^d E \otimes L^k$. This
means that there exists a basis $s_{1},...,s_{N}$ for $H^0(X,\textrm{Sym}^d E\otimes
L^k)$ such that
$$\sum s_{i} \otimes s_{i}^{*_{H^{(k)}}}= I_{\textrm{Sym}^d E}$$ $$\int_{X} \langle s_{i},
s_{j}\rangle_{H^{(k)}} \omega=\frac{rV}{N} \delta_{ij}.$$

Using the canonical isomorphism between $H^0(X,\textrm{Sym}^d E \otimes L^{k})$ and
$H^0(\mathbb{P}E^*,\mathcal{O}_{\mathbb{P}E^*}(d) \otimes L^{k})$, we get a
sequence of Hermitian metrics $\widehat{H^{(k)}}$ on
$\mathcal{O}_{\mathbb{P}E^*}(d)\otimes L^k$. We prove that the sequence 
$\widehat{H^{(k)}}$ is "almost balanced", i.e.  $$\int
\langle \widehat{s_{i}},
\widehat{s_{j}}\rangle_{\widehat{H^{(k)}}}
dvol_{\widehat{h^{(k)}}}= D^{(k)}\delta_{ij}+M^{(k)}_{ij},$$ where $D^{(k)}\rightarrow
C_{r,d}$ as $k \rightarrow \infty$ (see \eqref{eq11} for the definition of $C_{r,d}$) and $M^{(k)}$ is a trace-free
Hermitian matrix such that $||M^{(k)}||_{op}=o(k^{-\infty})$  as
$k\rightarrow \infty$ .

The next step is to perturb these almost balanced metrics to get
balanced metrics. As pointed out by Donaldson, the problem of
finding balanced metric can be viewed also as a finite dimensional
moment map problem solving the equation $M^{(k)}=0$. Indeed,
Donaldson shows that $M^{(k)}$ is the value of a moment map
$\mu_{D}$ on the space of ordered bases with the obvious action of
$SU(N)$. Now, the problem is to show that if for some ordered
basis $\underline{s}$, the value of moment map is very small, then
we can find a basis at which moment map is zero. The standard
technique is flowing down $\underline{s}$ under the gradient flow
of $|\mu_{D}|^2$ to reach a zero of $\mu_{D}$. We need a
Lojasiewicz type inequality to guarantee that the flow converges
to a zero of the moment map. This was done in \cite {S} by adapting
Phong-Sturm proof to our situation. 
In \cite {S2}, we generalize Theorem \ref{thm2b} to higher dimensional base manifolds that admits cscK metrics and do not have any nonzero holomorphic vector fields. 
After this work was completed, we became aware of the preprint \cite{DZ}.

{\textbf{Acknowledgements:} I am sincerely grateful to Richard Wentworth for many helpful discussions and suggestions
and his continuous help,
support and encouragement.}

\section{Preliminaries}

Let $V$ be a Hermitian vector
space of dimension $r$. The projective space $\mathbb{P}V^*$ can
be identified with the space of hyperplanes in $V$ via $$f\in V-\{0\}
\rightarrow {ker(f)}=V_{f}\subset V.$$  There is a natural isomorphism
between $V$ and
$H^{0}(\mathbb{P}V^*,\mathcal{O}_{\mathbb{P}V^*}(1))$ which sends
$v\in V$ to $\hat{v}\in
H^{0}(\mathbb{P}V^*,\mathcal{O}_{\mathbb{P}V^*}(1)) $ such that
for any $f \in V^*, \hat{v}(f)=f(v) $. Any Hermitian inner product $h$ on $V$ induces a Hermitian inner product  $\widehat{h}$ on $\mathcal{O}_{\mathbb{P}V^*}(1)$ as follows:

 $$\langle \hat{v},\hat{w}\rangle _{\widehat{h}}(f)=
\frac{f(v)\overline{f(w)}}{|f|^{2}},$$ for $v,w \in V$ and $f
\in V^*$.

For any positive integer $d$, define an equivalence relation $\sim$ on $V^{\otimes d}$ by $$v_{1} \otimes \dots \otimes v_{d}\sim v_{\sigma(1)} \otimes \dots \otimes v_{\sigma(d)}, \,\,\,\,\,\,\,\,\, \sigma \in S_{d}.$$  We define $\displaystyle \textrm{Sym}^d V =V^{\otimes d}/\sim$ and simply denote the class of $v_{1} \otimes \dots \otimes v_{d}$ in $\textrm{Sym}^d V$ by $v_{1}  \dots  v_{d}.$ Similar to the case of $d=1$ any Hermitian inner product $h$ on $V$ induces a Hermitian inner product  $\textrm{Sym}^d h$ on $\textrm{Sym}^d V$ by 
$$\langle v_{1}  \dots  v_{d}, w_{1}  \dots  w_{d}\rangle_{\textrm{Sym}^d h}= \frac{1}{d!}\sum _{\sigma \in S_{d}} \langle v_{1},w_{\sigma(1)}\rangle_{h} \dots \langle v_{d},w_{\sigma(d)}\rangle_{h} .$$  

\begin{remark}

Let $e_{1}, \dots , e_{r}$ be an orthonormal basis for $V$ with respect to $h$, then the set
 $$\{\Big( \frac{i_{1}! \dots i_{r}!}{d!} \Big)^{\frac{-1}{2}} e_{1}^{i_{1}} \dots e_{r}^{i_{r}} |\,\,\, 0\leq i_{\alpha} \leq d,   \,\,\,\sum_{\alpha=1}^r i_{\alpha}=d \}$$  forms an orthonormal basis for $\textrm{Sym}^d V$ with respect to $\textrm{Sym}^d h$.
 
\end{remark}

There is a natural isomorphism between $\textrm{Sym}^d V$ and
$H^{0}(\mathbb{P}V^*,\mathcal{O}_{\mathbb{P}V^*}(d))$ which sends
$v_{1}\dots v_{d}\in \textrm{Sym}^d V$ to $\widehat{v_{1}\dots v_{d} }\in
H^{0}(\mathbb{P}V^*,\mathcal{O}_{\mathbb{P}V^*}(d)) $ defined by 
\begin{equation}\label{eq1n}\widehat{v_{1}\dots v_{d} }([v^*])(w_{1}^* \otimes \dots \otimes w_{d}^*)= w_{1}^*(v_{1})\dots w_{d}^*(v_{d}),\end{equation} where $v^*\in V^*-\{0\}$ and $w_{i}^* \in V^*$ are scalar multiple of $v^*$. There exist complex numbers $\lambda_{1}, \dots ,\lambda_{d}$ such that $w_{i}^*=\lambda_{i} v^*. $ Thus, 
$$\widehat{v_{1}\dots v_{d} }([v^*])(w_{1}^* \otimes \dots \otimes w_{d}^*)=\lambda_{1} \dots \lambda_{d} v^*(v_{1})\dots v^*(v_{d})$$ and therefore  \eqref{eq1n} defines a well-defined section of $\mathcal{O}_{\mathbb{P}V^*}(d) $.

For any Hermitian inner product $H$ on $\textrm{Sym}^d V$, we define a metric $\hat{H}$ on $\mathcal{O}_{\mathbb{P}V^*}(d) $ by 
$$\langle \hat{s},\hat{t}\rangle_{\hat{H}}[v]= \frac{v^{\otimes d}(s) \overline{v^{\otimes d}(t)}}{|v \dots v|_{H}^{2}}.$$
In particular, we have $$\langle\hat{s},\hat{t}\rangle_{\widehat{\textrm{Sym}^d h}}[v]= \frac{v^{\otimes d}(s) \overline{v^{\otimes d}(t)}}{|v |_{h}^{2d}}.$$
The following lemmas are straight forward.

\begin{lemma}\label{lem1}

For any Hermitian inner product $h$ on $V$, we have  $$\hat{h}^{\otimes d}=\widehat{\textrm{Sym}^d h}.$$
\end{lemma}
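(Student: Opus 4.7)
The plan is to verify the equality $\hat h^{\otimes d}=\widehat{\textrm{Sym}^{d}h}$ pointwise on the line bundle $\mathcal{O}_{\mathbb{P}V^{*}}(d)$. Since the fibers are one-dimensional, it is enough to compare the norms of a single nonvanishing section at each point; polarization (sesquilinearity) then upgrades pointwise norm agreement to agreement of the full Hermitian pairings. Fix a point $[f]\in\mathbb{P}V^{*}$ and choose $v\in V$ with $f(v)\neq 0$, so that $\widehat{v\cdots v}$ is nonvanishing at $[f]$. More generally I will compare $|\widehat{v_{1}\cdots v_{d}}|^{2}$ on both sides for arbitrary $v_{1},\ldots,v_{d}\in V$, since polarization will then handle the sesquilinear extension.

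Under the canonical ring isomorphism $\bigoplus_{d\geq 0}\textrm{Sym}^{d}V\cong\bigoplus_{d\geq 0}H^{0}(\mathbb{P}V^{*},\mathcal{O}(d))$, the section $\widehat{v_{1}\cdots v_{d}}$ corresponds to $\hat v_{1}\otimes\cdots\otimes\hat v_{d}$ in $H^{0}(\mathcal{O}(1))^{\otimes d}$, so the formula for $\hat h$ together with the tensor-product structure of $\hat h^{\otimes d}$ immediately gives
$$|\widehat{v_{1}\cdots v_{d}}|^{2}_{\hat h^{\otimes d}}([f])=\prod_{i=1}^{d}\frac{|f(v_{i})|^{2}}{|f|^{2}}=\frac{|f(v_{1})\cdots f(v_{d})|^{2}}{|f|^{2d}}.$$
On the other hand, applying the defining formula for $\widehat{\textrm{Sym}^{d}h}$ together with \eqref{eq1n}, and using $f^{\otimes d}(v_{1}\cdots v_{d})=f(v_{1})\cdots f(v_{d})$, one obtains the same numerator $|f(v_{1})\cdots f(v_{d})|^{2}$, provided the denominator $|v\cdots v|_{\textrm{Sym}^{d}h}^{2}$ appearing in the general definition of $\hat H$ collapses, when $H=\textrm{Sym}^{d}h$, to $|v|_{h}^{2d}$.

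This last identity $|v\cdots v|_{\textrm{Sym}^{d}h}^{2}=|v|_{h}^{2d}$ is the only substantive step and is where any real obstacle would lie. It follows directly from the preceding Remark: writing $v=\sum_{\alpha}c_{\alpha}e_{\alpha}$ in an $h$-orthonormal basis, expanding $v\cdots v$ by the multinomial theorem, and then measuring lengths against the orthonormal basis $\bigl\{(i_{1}!\cdots i_{r}!/d!)^{-1/2}e_{1}^{i_{1}}\cdots e_{r}^{i_{r}}\bigr\}$ of $\textrm{Sym}^{d}V$, the normalization is precisely the one that makes the squared-coefficient sum recombine via the multinomial theorem to $\bigl(\sum_{\alpha}|c_{\alpha}|^{2}\bigr)^{d}=|v|_{h}^{2d}$. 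Once this identity is in place the two pointwise norms agree for every choice of $v_{1},\ldots,v_{d}$, polarization promotes this to equality of Hermitian inner products on every fiber, and the lemma follows.
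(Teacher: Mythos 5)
Your argument is correct: the paper gives no proof of this lemma (it is dismissed as ``straightforward''), and your pointwise comparison of $|\widehat{v_{1}\cdots v_{d}}|^{2}$ on the one-dimensional fibres, reduced to the identity $|v\cdots v|_{\textrm{Sym}^{d}h}^{2}=|v|_{h}^{2d}$, is exactly the natural verification. The only simplification worth noting is that this last identity follows in one line from the definition of $\textrm{Sym}^{d}h$, since $\langle v\cdots v,\,v\cdots v\rangle_{\textrm{Sym}^{d}h}=\frac{1}{d!}\sum_{\sigma\in S_{d}}\langle v,v\rangle_{h}^{d}=|v|_{h}^{2d}$, so the multinomial expansion against the orthonormal basis of the Remark is not needed.
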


\begin{lemma}\label{lem2}
 There exists a constant $C_{r,d}$ such that for any $v,w \in \textrm{Sym}^d V$ and any Hermitian inner product $h$ on $V$,
 
 \begin{equation}\label{eq11} d^{r-1}\int_{\mathbb{P}V^*} \langle \hat{v},\hat{w}\rangle_{\widehat{\textrm{Sym}^d h}}
\frac{\omega_{\textrm{FS},h}^{r-1}}{(r-1)!}= C_{r,d} \langle v,w\rangle_{\textrm{Sym}^d h},\end{equation}
where $\omega_{\textrm{FS},h}=i\bar{\partial}\partial \log \hat{h}$.

\end{lemma}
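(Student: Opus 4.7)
The plan is to prove \eqref{eq11} by Schur's lemma: the left-hand side is a $U(r)$-invariant Hermitian form on the irreducible representation $\textrm{Sym}^{d}V$, hence a scalar multiple of $\langle\cdot,\cdot\rangle_{\textrm{Sym}^{d}h}$, and an equivariance argument then shows the scalar depends only on $r$ and $d$.

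Concretely, fix $h$ and denote the left-hand side of \eqref{eq11} by $B_{h}(v,w)$. The first step is to verify the $U(V,h)$-invariance of $B_{h}$. For $g\in U(V,h)$, $\textrm{Sym}^{d}g$ preserves $\textrm{Sym}^{d}h$; the biholomorphism of $\mathbb{P}V^{*}$ induced by $g$ preserves $\omega_{\textrm{FS},h}$ (since it pulls $\hat{h}$ back to itself) and the fibre metric $\widehat{\textrm{Sym}^{d}h}=\hat{h}^{\otimes d}$ (by Lemma \ref{lem1}); and $\widehat{\textrm{Sym}^{d}g(v)}$ is the pullback of $\hat v$. A change of variables in the integral therefore gives $B_{h}(gv,gw)=B_{h}(v,w)$. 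Since $\textrm{Sym}^{d}V$ is an irreducible representation of $U(V,h)\cong U(r)$, Schur's lemma forces $B_{h}(v,w)=C(h)\langle v,w\rangle_{\textrm{Sym}^{d}h}$ for some positive real scalar $C(h)$.

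The second step is to show $C(h)$ does not depend on $h$. Given two Hermitian inner products $h,h'$, choose $\phi\in GL(V)$ with $h=\phi^{*}h'$. Then $\textrm{Sym}^{d}\phi$ is an isometry from $(\textrm{Sym}^{d}V,\textrm{Sym}^{d}h)$ to $(\textrm{Sym}^{d}V,\textrm{Sym}^{d}h')$, while the induced biholomorphism of $\mathbb{P}V^{*}$ transports $\omega_{\textrm{FS},h'}$ and $\widehat{\textrm{Sym}^{d}h'}$ back to $\omega_{\textrm{FS},h}$ and $\widehat{\textrm{Sym}^{d}h}$. Changing variables in the integral forces $C(h)=C(h')$, and we call this common value $C_{r,d}$.

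No serious obstacle is anticipated; the argument is essentially Schur's lemma plus invariance, matching the author's remark that the lemma is straightforward. If a closed form for $C_{r,d}$ is wanted, one evaluates \eqref{eq11} at the standard inner product on $\mathbb{C}^{r}$ with $v=w=e_{1}^{d}$: in the affine chart $w_{i}=z_{i}/z_{1}$ the integral reduces to a Beta-type integral of $(1+|w|^{2})^{-(d+r)}$ over $\mathbb{C}^{r-1}$, and the prefactor $d^{r-1}$ is precisely the rescaling needed for $C_{r,d}$ to stay bounded and nonzero as $d\to\infty$ (by Laplace's method the mass of $|z_{1}|^{2d}/|z|^{2d}$ concentrates in a region of complex size $d^{-1/2}$ around $[1:0:\cdots:0]$).
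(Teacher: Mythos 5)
The paper gives no proof of this lemma at all --- it is dismissed with ``the following lemmas are straightforward'' --- so there is no argument of the author's to compare against; your proof stands on its own and it is correct. The form $B_{h}(v,w)=d^{r-1}\int_{\mathbb{P}V^*}\langle\hat v,\hat w\rangle_{\widehat{\mathrm{Sym}^d h}}\,\omega_{\mathrm{FS},h}^{r-1}/(r-1)!$ is sesquilinear, Hermitian and positive definite; your check that $U(V,h)$ acts compatibly on $\mathbb{P}V^{*}$, on the line bundle with the metric $\hat h^{\otimes d}=\widehat{\mathrm{Sym}^d h}$ of Lemma \ref{lem1}, and on sections via \eqref{eq1n} is exactly what invariance requires; irreducibility of $\mathrm{Sym}^{d}V$ under $U(r)$ then gives $B_{h}=C(h)\,\mathrm{Sym}^{d}h$ by Schur, and the $GL(V)$-transport argument correctly kills the dependence of $C(h)$ on $h$. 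The more pedestrian route (presumably the one the author had in mind, and the one that also delivers the constant) is to take $h$ standard on $\mathbb{C}^{r}$ and test on the monomials $e^{\alpha}=e_{1}^{\alpha_{1}}\cdots e_{r}^{\alpha_{r}}$: since $\langle\widehat{e^{\alpha}},\widehat{e^{\beta}}\rangle_{\widehat{\mathrm{Sym}^d h}}([z])=z^{\alpha}\bar z^{\beta}/|z|^{2d}$, the classical identity
\begin{equation*}
\int_{\mathbb{P}^{r-1}}\frac{z^{\alpha}\bar z^{\beta}}{|z|^{2d}}\,\frac{\omega_{\mathrm{FS}}^{r-1}}{(r-1)!}=\delta_{\alpha\beta}\,\frac{(r-1)!\,\alpha!}{(d+r-1)!}\,V,\qquad V=\int_{\mathbb{P}^{r-1}}\frac{\omega_{\mathrm{FS}}^{r-1}}{(r-1)!},
\end{equation*}
matches $C_{r,d}\langle e^{\alpha},e^{\beta}\rangle_{\mathrm{Sym}^d h}=C_{r,d}\,\delta_{\alpha\beta}\,\alpha!/d!$ term by term and yields $C_{r,d}=d^{r-1}\,\frac{(r-1)!\,d!}{(d+r-1)!}\,V\to (r-1)!\,V$ as $d\to\infty$, confirming your remark that the prefactor $d^{r-1}$ is precisely the normalization that keeps $C_{r,d}$ bounded and nonzero. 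Either argument is complete; yours has the advantage of explaining \emph{why} the identity must hold, the computational one of producing the constant.
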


\begin{remark}

Let $H$ be a Hermitian inner product on $V$. Suppose there exists a constant $C$ such that $$\int_{\mathbb{P}V^*} <\hat{v},\hat{w}>_{\widehat{H}}
\frac{\omega_{\textrm{FS},H}^{r-1}}{(r-1)!}= C <v,w>_{H},$$ for any $v,w \in \textrm{Sym}^d V$. Then $H=\textrm{Sym}^d h$ for some Hermitian inner product $h$ on $V$.

\end{remark}

\begin{lemma}\label{lem7}

Let $h_{0}$ and $h$ be Hermitian inner products on $V$. If
$||h-h_{0}||_{h_{0}}\leq \epsilon$, then
$||\hat{h}-\hat{h}_{0}||_{C^2(\hat{h}_{0})}\leq C \epsilon,$ for a constant $C$ depends only on $r$ and $d$.

\end{lemma}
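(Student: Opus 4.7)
The plan is a direct smoothness argument, leveraging the $U(r)$-equivariance of the assignment $h \mapsto \hat{h}$ to ensure that the bounding constant is universal. First, choose an $h_{0}$-orthonormal basis for $V$; in this basis, $h_{0}$ corresponds to the identity matrix $I$ and $h$ to a positive-definite Hermitian matrix $H$ with $\|H - I\|_{\mathrm{op}} \leq \epsilon$, the hypothesis $\|h - h_{0}\|_{h_{0}} \leq \epsilon$ translating into such a bound after a harmless comparison of norms on $r\times r$ Hermitian matrices (with constants depending only on $r$). Since the Fubini--Study construction is equivariant under the induced $U(r)$-action on $\mathbb{P}V^{*} \simeq \mathbb{P}^{r-1}$, proving the estimate in this normalized setting suffices and automatically yields a constant depending only on $r$ and $d$.

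Next, I would cover $\mathbb{P}^{r-1}$ by the finite atlas of standard affine charts $U_{i} = \{[f] : f_{i} \neq 0\}$, each equipped with the obvious local frame of $\mathcal{O}_{\mathbb{P}V^{*}}(-1)$. In these trivializations, the local representative of $\hat{h}$ on $\mathcal{O}_{\mathbb{P}V^{*}}(1)$ is the reciprocal of the smooth positive function
\begin{equation*}
\rho_{H}([f]) = f^{*}\, H^{-1}\, f,
\end{equation*}
where $f_{i} \equiv 1$ and the remaining components are the affine coordinates on $U_i$. The ratio $u := \hat{h}/\hat{h}_{0} = \rho_{I}/\rho_{H}$ is a smooth strictly positive function on $\mathbb{P}^{r-1}$, and $\|\hat{h}-\hat{h}_{0}\|_{C^{2}(\hat{h}_{0})}$ is comparable to $\|u - 1\|_{C^{2}(\mathbb{P}^{r-1})}$, since the background Fubini--Study geometry determined by $\hat{h}_{0}$ is fixed once and for all.

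The final step is a Taylor expansion: write $H^{-1} = I - (H-I) + (H-I)^{2} - \cdots$, a Neumann series that converges in operator norm once, say, $\epsilon < 1/2$. Substituting into $\rho_{H}$, we see that $u - 1$ and its partial derivatives up to order two in the affine coordinates on each chart are uniformly $O(\|H - I\|_{\mathrm{op}})$ with implicit constants depending only on $r$. Passing from $\mathcal{O}(1)$ to $\mathcal{O}(d)$ via Lemma \ref{lem1} amounts to raising $u$ to the $d$-th power, which contributes only a polynomial-in-$d$ factor. Patching across the finitely many charts of $\mathbb{P}^{r-1}$ completes the global $C^{2}$ estimate. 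There is no genuine obstacle; the one point worth watching is that the $U(r)$-normalization fully absorbs the dependence on $h_{0}$, so that the resulting constant $C = C(r,d)$ is truly universal.
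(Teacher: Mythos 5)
The paper offers no proof of this lemma at all --- it is listed among the statements declared ``straight forward'' just before Lemma \ref{lem1} --- so there is nothing to compare against except the intended meaning. Your argument is a correct and essentially complete way to fill the gap: normalizing $h_0$ to the identity, writing the local potential of $\hat h$ as $1/\rho_H$ with $\rho_H(f)=f^*H^{-1}f$, and expanding $H^{-1}$ in a Neumann series does give $u-1=O(\epsilon)$ in $C^2$, and the passage to $\mathcal{O}(d)$ via $\hat h^{\otimes d}=\widehat{\mathrm{Sym}^d h}$ (Lemma \ref{lem1}) only costs a $d$-dependent factor. Two small points deserve explicit mention in a final write-up. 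First, the affine charts $U_i$ are not compact, so ``uniformly $O(\|H-I\|)$ for the coordinate derivatives'' should be justified by working on the standard compact subcover $\{[f]:|f_i|=\max_j|f_j|\}$ (where all coordinates are bounded by $1$ and $\rho_I\in[1,r]$, and where the fixed Fubini--Study metric of $\hat h_0$ is uniformly equivalent to the Euclidean one), which is what makes the patching legitimate. Second, as the lemma is actually invoked in Proposition \ref{prop7}, the metric $H$ on $\mathrm{Sym}^d V$ need not be of the form $\mathrm{Sym}^d h$, so the reduction ``raise $u$ to the $d$-th power'' covers only the special case stated; however, your chart computation applies verbatim to $\rho_H([v])=(v^{\otimes d})^*H^{-1}(v^{\otimes d})$ for an arbitrary inner product $H$ on $\mathrm{Sym}^d V$ close to $\mathrm{Sym}^d h_0$, since the Veronese-type map $[v]\mapsto[v^d]$ is a fixed smooth map of compact manifolds, and this yields the version the paper actually uses with a constant still depending only on $r$ and $d$.
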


\begin{lemma}\label{lem8}

Let $X$ be a K\"ahler manifold of dimension $n$ and $\Omega_{0} $
and $\Omega$ be two K\"ahler forms on $X$. There exists a constant $C$ depends only on the dimension of $X$ such that if $||\Omega
-\Omega_{0}||_{C^0(\Omega_{0})}\leq \epsilon$, then
$\displaystyle \Big|\frac{\Omega^n-\Omega_{0}^n}{\Omega_{0}^n} \Big|\leq C \epsilon$. 

\end{lemma}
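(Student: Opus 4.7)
The plan is to reduce the inequality to a pointwise linear-algebra statement and then bound a product of real numbers close to $1$ by an elementary estimate.

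First, I would fix an arbitrary point $p \in X$ and diagonalize $\Omega$ with respect to $\Omega_{0}$: since $\Omega_0$ is a positive $(1,1)$-form, one can choose local complex coordinates $(z_1,\dots,z_n)$ centered at $p$ so that at $p$
\begin{equation*}
\Omega_{0} = i\sum_{j=1}^{n} dz_{j}\wedge d\bar z_{j},\qquad \Omega = i\sum_{j=1}^{n}\lambda_{j}\,dz_{j}\wedge d\bar z_{j},
\end{equation*}
where $\lambda_{1},\dots,\lambda_{n}>0$ are the eigenvalues of $\Omega$ with respect to $\Omega_{0}$ at $p$. Computing top exterior powers in this basis gives
\begin{equation*}
\frac{\Omega^{n}}{\Omega_{0}^{n}}(p) \;=\; \prod_{j=1}^{n}\lambda_{j}.
\end{equation*}

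Next, I would translate the hypothesis $\|\Omega-\Omega_{0}\|_{C^{0}(\Omega_{0})}\leq \epsilon$ into a bound on the $\lambda_j$. In the simultaneously diagonalizing frame above, the pointwise norm of $\Omega-\Omega_{0}$ with respect to $\Omega_0$ is (a fixed multiple of) $\bigl(\sum_j (\lambda_j-1)^2\bigr)^{1/2}$, so the hypothesis yields $|\lambda_j-1|\le \epsilon$ for every $j$ (possibly after absorbing a dimensional constant into $C$). In particular, we may assume $\epsilon\le 1/2$, else the conclusion holds trivially by taking $C$ large.

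Finally, I would run the elementary estimate for a product of numbers close to $1$. Writing $\lambda_j = 1+\mu_j$ with $|\mu_j|\le \epsilon$, a term-by-term expansion gives
\begin{equation*}
\Bigl|\prod_{j=1}^{n}(1+\mu_{j})-1\Bigr| \;\le\; \sum_{k=1}^{n}\binom{n}{k}\epsilon^{k} \;=\; (1+\epsilon)^{n}-1 \;\le\; n\,2^{n-1}\,\epsilon.
\end{equation*}
Setting $C := n\,2^{n-1}$ (adjusted by the dimensional constant from the previous step) and noting that the argument is uniform in $p$ delivers the desired pointwise bound $\bigl|\Omega^{n}/\Omega_{0}^{n}-1\bigr|\le C\epsilon$.

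There is really no main obstacle here: once simultaneous diagonalization of two Hermitian forms is invoked, the statement collapses to the elementary inequality $(1+\epsilon)^{n}-1\le n2^{n-1}\epsilon$ for $\epsilon\le 1$. The only point requiring a bit of care is tracking the dimensional constants that relate the pointwise operator norm of $\Omega-\Omega_{0}$ in an $\Omega_{0}$-orthonormal frame to the eigenvalue deviations $|\lambda_j - 1|$, but this is absorbed into the final constant $C$.
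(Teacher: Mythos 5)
Your argument is correct. Note that the paper offers no proof of this lemma at all --- it is one of the statements the author declares ``straightforward'' and leaves to the reader --- so there is nothing to compare against; your simultaneous-diagonalization argument is exactly the standard way to fill this in. At a point $p$ you reduce $\Omega_0$ to the standard form and diagonalize $\Omega$ with eigenvalues $\lambda_j>0$, the hypothesis gives $|\lambda_j-1|\le C'\epsilon$, and the quotient of volume forms is $\prod_j \lambda_j$, bounded by the elementary estimate $\bigl|\prod_j(1+\mu_j)-1\bigr|\le (1+\epsilon)^n-1\le n2^{n-1}\epsilon$ for $\epsilon\le 1$. The only imprecise sentence is the claim that for $\epsilon>1/2$ ``the conclusion holds trivially by taking $C$ large'': for unbounded $\epsilon$ the ratio $\prod_j\lambda_j$ can grow like $\epsilon^n$, so no dimensional constant $C$ makes the linear bound hold for all $\epsilon$; the lemma as stated is really only meaningful (and true) for $\epsilon$ in a bounded range, which is how it is used in the paper (with $\epsilon\to 0$). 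A cleaner phrasing is simply to prove the bound for $\epsilon\le 1$, which your computation already does, and to note that this is the only regime in which the lemma is invoked.
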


\begin{proposition}\label{prop1}

Let $h$ be a Hermitian inner product on $V$ and $H$ be a Hermitian inner product on  $\textrm{Sym}^d V$ such that $||H-\textrm{Sym}^d h||_{\textrm{Sym}^d h} < \min(\epsilon, \frac{1}{2})$.
 Then for any $v,w \in \textrm{Sym}^d V$, we have

$$\Big|d^{r-1}\int_{\mathbb{P}V^*} \langle\widehat{v},\widehat{w}\rangle_{\widehat{H}}
\frac{\omega_{FS,h}^{r-1}}{(r-1)!}-C_{r,d}\langle   v,w
\rangle_{H} \Big| \leq C\epsilon |v|_{H}|w|_{H},$$ where $C$ is a constant depends only on $r$ and $d$.
\end{proposition}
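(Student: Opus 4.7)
The plan is to interpolate through $\widehat{\textrm{Sym}^d h} = \hat h^{\otimes d}$ (Lemma \ref{lem1}), for which Lemma \ref{lem2} supplies an exact identity, and bound the two resulting errors separately. Write the quantity to be estimated as $T_1 + T_2$, where
\[
T_1 = d^{r-1}\int_{\mathbb{P}V^*}\bigl(\langle\hat v,\hat w\rangle_{\hat H} - \langle\hat v,\hat w\rangle_{\widehat{\textrm{Sym}^d h}}\bigr)\frac{\omega_{\textrm{FS},h}^{r-1}}{(r-1)!}
\]
is an integral error and $T_2 = C_{r,d}\bigl(\langle v,w\rangle_{\textrm{Sym}^d h} - \langle v,w\rangle_H\bigr)$ is a purely algebraic error. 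It suffices to bound each of them by $C\epsilon|v|_H|w|_H$.

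Let $A$ be the self-adjoint (with respect to $\textrm{Sym}^d h$) operator on $\textrm{Sym}^d V$ defined by $\langle u,u'\rangle_H = \langle Au,u'\rangle_{\textrm{Sym}^d h}$; the hypothesis $\|H - \textrm{Sym}^d h\|_{\textrm{Sym}^d h} < \epsilon < 1/2$ gives the spectral bound $\mathrm{spec}(A) \subset (1-\epsilon,1+\epsilon)$. The algebraic term $T_2$ is then immediately bounded by $\epsilon\,|v|_{\textrm{Sym}^d h}|w|_{\textrm{Sym}^d h} \leq 2\epsilon\,|v|_H|w|_H$, using the norm equivalence $|\cdot|_{\textrm{Sym}^d h} \leq (1-\epsilon)^{-1/2}|\cdot|_H$.

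For the integral error, the pointwise ratio of the fibre metrics on $\mathcal{O}_{\mathbb{P}V^*}(d)$ is independent of $v$ and $w$: from the defining formula for $\hat H$, the ratio at $[f] \in \mathbb{P}V^*$ equals $|f\cdots f|_{\textrm{Sym}^d h}^2 / |f\cdots f|_H^2$, which lies in $\bigl((1+\epsilon)^{-1}, (1-\epsilon)^{-1}\bigr)$ by the spectral bound on $A$ (transferred to the dual side, which merely reciprocates the eigenvalues). Hence
\[
\bigl|\langle\hat v,\hat w\rangle_{\hat H} - \langle\hat v,\hat w\rangle_{\widehat{\textrm{Sym}^d h}}\bigr| \leq C\epsilon\,|\hat v|_{\widehat{\textrm{Sym}^d h}}|\hat w|_{\widehat{\textrm{Sym}^d h}}
\]
pointwise on $\mathbb{P}V^*$. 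Integrating this inequality, applying pointwise Cauchy–Schwarz, and then invoking Lemma \ref{lem2} with $v=w$ to control $\int|\hat v|_{\widehat{\textrm{Sym}^d h}}^2\,\omega_{\textrm{FS},h}^{r-1}/(r-1)!$ by a constant times $|v|_{\textrm{Sym}^d h}^2 \leq C'|v|_H^2$ finishes the bound on $T_1$.

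The only subtle step is verifying the uniform $1 + O(\epsilon)$ estimate for the ratio of the fibre metrics on $\mathcal{O}_{\mathbb{P}V^*}(d)$, which requires transferring the operator bound on $\textrm{Sym}^d V$ over to the dual space where the denominators of $\hat H$ and $\widehat{\textrm{Sym}^d h}$ live; once that is in hand, the remainder of the argument is a routine application of Cauchy–Schwarz together with Lemma \ref{lem2}.
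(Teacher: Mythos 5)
Your proposal is correct and follows essentially the same route as the paper: the same splitting into the integral error $T_1$ and the algebraic error $T_2$, with Lemma \ref{lem2} and Cauchy--Schwarz closing the argument. The only real difference is cosmetic: where the paper compares the two inner products in coordinates (picking up a harmless dimensional constant $K$) and cites Lemma \ref{lem7} for the fibre-metric comparison, you use the spectral bound on the relative endomorphism $A$ and observe directly that the ratio of the two fibre metrics on $\mathcal{O}_{\mathbb{P}V^*}(d)$ is the scalar function $|f\cdots f|^2_{\textrm{Sym}^d h}/|f\cdots f|^2_{H}$ (dual norms), hence $1+O(\epsilon)$ --- a slightly more self-contained justification of the same pointwise estimate.
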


\begin{proof}
Let $e_{1}, \dots e_{K}$ be an orthonormal basis for $\textrm{Sym}^d V$ with respect to $\textrm{Sym}^d h$. Define $H_{ij}=H(e_{i},e_{j})$ and $\epsilon_{ij}=H_{ij}-\delta_{ij}$.  We have $|\epsilon_{ij}|=|H_{ij}-\delta_{ij} | \leq \epsilon$. Given $v,w \in \textrm{Sym}^d V$, we can write $v= \sum a_{i}e_{i}$ and $w= \sum b_{i}e_{i}$. We have 

\begin{align*} \Big| \langle v,w\rangle_{\textrm{Sym}^d h} - \langle v,w \rangle_{H} \Big| &=\Big|  \sum a_{i} \bar{b_{i}}-     \sum a_{i} \bar{b_{j}}H_{ij}       \Big|=\Big|  \sum a_{i} \bar{b_{j}}\epsilon_{ij}       \Big|\\& \leq \epsilon \sum |a_{i}| |\bar{b_{j}}| \leq K \epsilon \sum |a_{i}|^2 \sum |b_{j}|^2\\&= \epsilon K |v|_{\textrm{Sym}^d h}|w|_{\textrm{Sym}^d h}. \end{align*}
The last inequality follows from Cauchy-Shwartz inequality. 
By a unitary change of basis we may assume that $H_{ij}=0$ if $i \neq j$. Therefore the basis $\{f_{1}= H_{11}^{\frac{-1}{2}}e_{1}, \dots f_{K}=H_{11}^{\frac{-1}{2}}e_{K}\}$ is an orthonormal basis for $\textrm{Sym}^d V$ with respect to $H$. We have $\frac{1}{2}\leq H_{ii} \leq \frac{3}{2}$ since $|H_{ii}-1| \leq \frac{1}{2}$. Thus, $$\Big ||f_{i}|^2_{\textrm{Sym}^d h}-|f_{i}|^2_{H}\Big |=|1-H_{ii}^{-1}|=\frac{|1-H_{ii}|}{H_{ii}}\leq 2\epsilon.$$
Therefore by the same argument, we conclude that \begin{equation}\label{eq2n} \Big| \langle v,w\rangle_{\textrm{Sym}^d h} -\langle v,w \rangle_{H} \Big| \leq 2 \epsilon K |v|_{H}|w|_{H}. \end{equation} Applying \eqref{eq2n}, Lemma \ref{lem2}, Lemma \ref{lem7} and Lemma \ref{lem8}, we have
\begin{align*}&\Big|d^{r-1}\int_{\mathbb{P}V^*} \langle\widehat{v},\widehat{w}\rangle_{\widehat{H}}\frac{\omega_{FS,h}^{r-1}}{(r-1)!}-C_{r, d} \langle v,w \rangle _{H} \Big|\\ \,\,\,&\leq \Big|d^{r-1}\int_{\mathbb{P}V^*}\langle\widehat{v},\widehat{w}\rangle_{\widehat{H}}\frac{\omega_{FS,h}^{r-1}}{(r-1)!}-d^{r-1}\int_{\mathbb{P}V^*}\langle\widehat{v},\widehat{w}\rangle_{\widehat{\textrm{Sym}^d h}}
\frac{\omega_{FS,h}^{r-1}}{(r-1)!}\Big|+ C_{r, d} \Big| \langle v,w \rangle_{\textrm{Sym}^d h} - \langle v,w \rangle_{H} \Big|\\&
\leq d^{r-1} \int_{\mathbb{P}V^*}\Big|\langle\widehat{v},\widehat{w}\rangle_{\widehat{H}} -\langle\widehat{v},\widehat{w}\rangle_{\hat{\textrm{Sym}^d h}}\Big|
\frac{\omega_{FS,h}^{r-1}}{(r-1)!}+ C_{r, d} \Big| \langle v,w \rangle_{\textrm{Sym}^d h} -\langle v,w \rangle_{H} \Big| 
\\&\leq
C\epsilon d^{r-1}\int_{\mathbb{P}V^*}|\widehat{v}|_{\widehat{\textrm{Sym}^d h}} |\widehat{w}|_{\widehat{\textrm{Sym}^d h}}     \frac{\omega_{FS,h}^{r-1}}{(r-1)!} +2K C_{r,d} \epsilon |v|_{H}|w|_{H}\\&\leq\epsilon(Cd^{r-1} V+2K C_{r,d})|v|_{H}|w|_{H}\end{align*}
The last inequality follows from the fact that $\sup_{\mathbb{P}V^*}|\widehat{v}|_{\widehat{\textrm{Sym}^d h}}=|v|_{\textrm{Sym}^d h} $.

\end{proof}

\section{ Balanced Metrics On Holomorphic Vector Bundles}

Let $(X,\omega_{0})$ be a compact K\"ahler manifold of dimension
$n$ and $(L,g)$ be an ample holomorphic Hermitian line bundle over
$X$ such that $ i\bar{\partial}\partial\log g=\omega_{0}.$ Let $E$ be a holomorphic vector
bundle of rank $r $ over $X$. By possibly tensoring with high power of the ample line bundle $L$, we may assume that
$E$ is very ample. Therefore we can embed $X$ into $G(r, H^{0}(X,E)^*)$, the Grassmanian of $r$-planes in $ H^{0}(X,E)^*$.
Indeed, for any $x \in X$, we have the evaluation map
$H^{0}(X,E)\rightarrow E_{x}$, which sends $s$ to $s(x)$. Since
$E$ is globally generated, this map is a surjection. So its dual
is an inclusion of $E_{x}^* \hookrightarrow H^{0}(X,E)^*$, which
determines an $r$-dimensional subspace of $H^{0}(X,E)^*$.
Therefore we get a map $\iota: X \rightarrow G(r, H^{0}(X,E)^*)$.
Since $E$ is very ample, $\iota$ is an embedding. Clearly we have
$\iota^*U_{r}=E^*$, where $U_{r}$ is the tautological vector
bundle on $G(r, H^{0}(X,E)^*)$, i.e. at any $r$-plane in $G(r,
H^{0}(X,E)^*)$, the fibre of $U_{r}$ is exactly that $r$-plane. A
choice of basis for $H^{0}(X,E)$ gives an isomorphism between
$G(r, H^{0}(X,E)^*)$ and the standard Grassmanian $G(r,N)$, where $N= \dim
H^{0}(X,E)$. We have the standard Fubini-Study Hermitian metric on
$U_{r}$, so we can pull it back to $E$ and get a Hermitian metric
on $E$.

\begin{definition}\label{def1n}

The embedding is called balanced if $$\int_{X} \, \langle s_{i},
s_{j} \rangle_{\iota^* h_{\textrm{FS}}} \, \frac{\omega^{n}}{n!}=C
\delta_{ij}.$$ Notice that being balanced depends on the choice of
the K\"ahler form. A Hermitian metric on $E$ is called balanced (more precisely $\omega$-balanced) if it is the pull back $\iota^* h_{\textrm{FS}}$, where   $\iota$ is a balanced embedding.

\end{definition}

Equivalently, we can formulate the definition of balance metrics in terms of Bergman kernels.

\begin{definition}
Let $h$ be a Hermitian metric on $E$ and  $s_{1},...,s_{N}$ be an orthonmal basis for $H^{0}(X,E)$ with respect to the inner product $$\langle s,t\rangle=\int_{X} \langle s(x),t(x)\rangle_{h}\frac{\omega_{0}^n}{n!}$$
The Bergman kernel of $(E,h)$ is an endomorhism of $E$ defined by $$B(h, \omega_{0})=\sum_{i=1}^N s_{i}
\otimes s_{i}^{*_{h}}.$$ Note that $B(h, \omega_{0})$ does
not depend on the choice of the orthonmal basis. 

A Hermitian metric $h$ on $E$ is balanced if and only if $B(h, \omega_{0})= C I_{E}$ for a positive constant $C$.

\end{definition}

We recall Catlin-Tian-Yau-Zeldich asymptotic expansion of Bergman kernel.

\begin{theorem}(\cite{C}, \cite{Z})
Let $(X,\omega_{0})$ be a compact K\"ahler manifold of dimension
$n$ and $(L,g)$ be an ample holomorphic Hermitian line bundle over
$X$ such that $ i\bar{\partial}\partial\log g=\omega_{0}.$ For any Hermitian metric $h$ on the vector bundle $E$, there exist smooth endomorphisms 
$A_{i}(h) \in \Gamma(X, End(E))$ such  that the
following  asymptotic expansion holds as $k \rightarrow \infty$
\begin{equation}\label{eq1}B(h \otimes g^{\otimes k}, \omega_{0}) \sim k^n+A_{1}(h)k^{n-1}+\dots .\end{equation}

\end{theorem}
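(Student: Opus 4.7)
The plan is to prove the expansion by reducing it to a model problem on $\mathbb{C}^n$ via localization, and then extracting the coefficients $A_i(h)$ from a formal Taylor argument at each point. The first step is an off-diagonal decay estimate: using H\"ormander-type $L^2$ estimates for $\bar{\partial}$, or equivalently Agmon estimates for the Kodaira Laplacian of $E \otimes L^k$, one shows that the Schwartz kernel $B_k(x,y)$ of the Bergman projection onto $H^{0}(X, E \otimes L^k)$ satisfies $|B_k(x,y)|_{h \otimes g^k} = O(k^{-\infty})$ uniformly in $x,y$ with $d(x,y) \geq \delta$, for any fixed $\delta > 0$. It follows that $B(h \otimes g^{\otimes k}, \omega_0)(x_0) = B_k(x_0,x_0)$ depends only on the geometry of $(E,L,\omega_0,h,g)$ in an arbitrarily small neighborhood of $x_0$, up to an error of order $k^{-\infty}$.

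For the local analysis, I would fix $x_0$ and pick K\"ahler normal coordinates $z$ on a chart around it, a local holomorphic frame $e_L$ for $L$ with weight $\log |e_L|_g^{-2} = |z|^2 + O(|z|^4)$, and a local holomorphic frame for $E$ that is unitary at $x_0$ with vanishing first covariant derivative of $h$ there. Rescaling $z = u/\sqrt{k}$ converts the local weighted problem into a perturbation of the Bargmann--Fock setting on $\mathbb{C}^n \otimes \mathbb{C}^r$ with weight $e^{-|u|^2}$; the perturbation is organized in half-integer powers of $k^{-1/2}$ and its coefficients are polynomials in the Taylor coefficients of the K\"ahler potential of $L$, of $\omega_0$, and of the curvature of $h$. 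The model Bergman kernel on the diagonal equals $\pi^{-n}\mathrm{Id}_E$, and a formal Neumann series in these perturbations produces a diagonal expansion $B_k(x_0,x_0) \sim k^{n}(\mathrm{Id}_E + k^{-1}A_1(h)(x_0) + k^{-2}A_2(h)(x_0) + \cdots)$, in which each $A_i(h)(x_0)$ is a universal polynomial in the curvatures of $h$, $g$, $\omega_0$ and their covariant derivatives at $x_0$, and hence extends to a smooth section of $\mathrm{End}(E)$.

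The main obstacle is upgrading the formal expansion to a genuine $C^\infty$ asymptotic expansion with controlled remainders. The cleanest route is the Boutet de Monvel--Sjöstrand parametrix for the Szeg\H{o} kernel on the unit circle bundle of $L^*$, twisted by the pull-back of $E$, which produces an approximate Bergman kernel modulo smoothing operators and was adapted to the vector bundle setting by Ma and Marinescu. A more hands-on alternative is Tian's peak section method extended to bundles: build local almost-holomorphic sections that match the rescaled model kernel to any prescribed order, correct them to global holomorphic sections via H\"ormander's $L^2$ estimates (where ampleness of $L$ and the positivity of $h \otimes g^k$ for large $k$ are decisive), and bound the corrections in $C^\ell$ by elliptic regularity. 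Either approach produces the stated expansion with smooth remainder, and the uniformity of the coefficients as functions of $h$ is automatic from the explicit recursion.
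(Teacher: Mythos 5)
The paper offers no proof of this statement: it is quoted verbatim as the Catlin--Tian--Yau--Zelditch expansion and attributed to \cite{C} and \cite{Z}, so there is no internal argument to compare yours against. Your outline is a faithful summary of how those references (and the later treatments of Ma--Marinescu and of Wang for the twisted, higher-rank case) actually proceed: off-diagonal decay of the Bergman kernel via Agmon/H\"ormander estimates, localization and the $z=u/\sqrt{k}$ rescaling to the Bargmann--Fock model, a formal recursion producing the $A_i(h)$ as universal polynomials in the curvatures of $h$, $g$, $\omega_0$ and their covariant derivatives, and then either the Boutet de Monvel--Sj\"ostrand parametrix or peak sections to convert the formal series into a genuine $C^\infty$ expansion with controlled remainders. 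Two caveats. First, what you have written is an architecture, not a proof: the entire content of the theorem lives in the step you label ``the main obstacle,'' and saying that the parametrix ``produces the stated expansion'' is a citation in disguise, which puts you on the same footing as the paper rather than ahead of it. Second, watch the normalization: you assert the model diagonal kernel is $\pi^{-n}\mathrm{Id}_E$ and then write the leading term as $k^n\mathrm{Id}_E$; with the convention $\omega_0=i\bar{\partial}\partial\log g$ used here the factors of $\pi$ must be absorbed into the volume form $\omega_0^n/n!$ for the leading coefficient to be exactly $k^n$, and this bookkeeping is worth making explicit since the paper later relies on the precise constants (e.g.\ in Lemma \ref{lem4}, where $B_k(h)-\frac{\chi(k)}{rV}I_E$ is computed to order $k^{n-q-1}$). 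For the purpose this theorem serves in the paper, the relevant point is only that the $A_i(h)$ are local curvature invariants --- which is exactly what Lemma \ref{lem5} uses --- and your sketch does capture why that is true.
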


There is a close relationship between stability of vector bundles and the existence of balanced
metrics given by the following theorem of Wang.

\begin{theorem}(\cite{W},\cite[Theorem 1.2]{W2})\label{thm5}
The bundle $E$ is Gieseker stable if and only if there exist
balanced metrics $h^{(k)}$ on $E \otimes L^k$ for $k \gg 0$. In
addition if there exists a Hermitian metric $h_{\infty}$ on $E$
such that $h_{k}\rightarrow h_{\infty}$ in $C^{\infty}$, then \begin{equation}\label{eq10}\frac{i}{2\pi}
\Lambda F_{(E,h_{\infty})}+ \frac{1}{2} S(\omega_{\infty}) I_{E}=
\Big(\frac{d}{Vr}+\frac{\overline{s}}{2}\Big) I_{E},\end{equation} where
$h_{k}=h^{(k)} \otimes g_{\infty}^{\otimes (-k)}$,
$S(\omega_{\infty})$ is the scalar curvature of $\omega_{\infty}$
and $\overline{s}$ is the average of the scalar curvature.
Conversely, if $h_{\infty}$ solves \eqref{eq10}, then there exists a sequence of balanced metrics $h^{(k)}$ on $E \otimes L^k$ for $k \gg 0$ and
$h_{k}\rightarrow h_{\infty}$ in $C^{\infty}$.

\end{theorem}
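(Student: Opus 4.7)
The plan is to reduce the construction of balanced metrics on $(\mathbb{P}E^*, \mathcal{O}_{\mathbb{P}E^*}(d) \otimes \pi^* L^k)$ to the (already known) existence of balanced metrics on the symmetric power $\textrm{Sym}^d E \otimes L^k$ over $X$, and then correct the resulting metrics by a finite-dimensional moment-map perturbation.

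First, I would promote stability of $E$ to stability of $\textrm{Sym}^d E$. By the Hitchin-Kobayashi correspondence, Mumford stability of $E$ on the Riemann surface $X$ produces a Hermitian-Einstein metric $h_\infty$ on $E$. A direct orthonormal-frame computation shows that $\textrm{Sym}^d h_\infty$ is itself Hermitian-Einstein on $\textrm{Sym}^d E$, so $\textrm{Sym}^d E$ is Mumford (hence Gieseker) stable. Applying Wang's Theorem \ref{thm5} to $\textrm{Sym}^d E$ yields, for $k$ large, balanced Hermitian metrics $H^{(k)}$ on $\textrm{Sym}^d E \otimes L^k$ together with the convergence $h_k := H^{(k)} \otimes g^{-k} \to \textrm{Sym}^d h_\infty$ in $C^\infty$. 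A bootstrap based on the Catlin-Tian-Yau-Zelditch expansion \eqref{eq1} strengthens this to $||h_k - \textrm{Sym}^d h_\infty||_{C^2} = o(k^{-\infty})$, which is essential below.

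Next, I would use the canonical isomorphism $H^0(X, \textrm{Sym}^d E \otimes L^k) \cong H^0(\mathbb{P}E^*, \mathcal{O}_{\mathbb{P}E^*}(d) \otimes \pi^* L^k)$ recorded in \eqref{eq1n} to transport a basis $s_1,\dots,s_N$ of $H^0(X, \textrm{Sym}^d E \otimes L^k)$ realizing the balanced condition for $H^{(k)}$ to sections $\widehat{s_1},\dots,\widehat{s_N}$ on $\mathbb{P}E^*$, and to produce the Hermitian metric $\widehat{H^{(k)}}$ on $\mathcal{O}_{\mathbb{P}E^*}(d) \otimes \pi^* L^k$. I would then compute
$$\int_{\mathbb{P}E^*} \langle \widehat{s_i}, \widehat{s_j}\rangle_{\widehat{H^{(k)}}}\, dvol_{\widehat{h^{(k)}}}$$
by integrating first along the $\mathbb{P}^{r-1}$-fiber and then over $X$. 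The fiberwise integral is controlled by Proposition \ref{prop1}: at each $x \in X$, the metric $H^{(k)}_x$ differs from $(\textrm{Sym}^d h_\infty \otimes g^k)_x$ --- which \emph{is} of the form $\textrm{Sym}^d$ of a Hermitian inner product on $E_x$ --- by an $o(k^{-\infty})$ error, so Proposition \ref{prop1} gives a fiber integral equal to $C_{r,d}\langle s_i(x), s_j(x)\rangle_{H^{(k)}}$ up to the same error. Integrating over $X$ and using the $L^2$-orthogonality portion of the balanced condition for $H^{(k)}$ yields the almost-balanced identity
$$\int \langle \widehat{s_i}, \widehat{s_j}\rangle_{\widehat{H^{(k)}}}\, dvol_{\widehat{h^{(k)}}} = D^{(k)}\delta_{ij} + M^{(k)}_{ij},$$
with $D^{(k)} \to C_{r,d}$ and $||M^{(k)}||_{op} = o(k^{-\infty})$; subtracting its trace makes $M^{(k)}$ trace-free.

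The almost-balanced basis is then corrected to a genuinely balanced one by running Donaldson's moment-map flow on the finite-dimensional space of ordered bases of $H^0(\mathbb{P}E^*, \mathcal{O}_{\mathbb{P}E^*}(d)\otimes\pi^*L^k)$ under the $SU(N)$-action: $M^{(k)}$ is precisely the value of the moment map $\mu_D$ at $\underline{s}$, so it suffices to flow down $|\mu_D|^2$ and find a nearby zero. The obstacle I expect to be hardest is establishing a $k$-uniform Lojasiewicz-type inequality that guarantees convergence of this flow; however, the Phong-Sturm argument as adapted in \cite{S} applies essentially verbatim, and the rapid decay $||M^{(k)}||_{op} = o(k^{-\infty})$ dominates the polynomial deterioration in $k$ of the Lojasiewicz constants, producing the desired sequence of balanced metrics $g_k$ on $\mathcal{O}_{\mathbb{P}E^*}(d) \otimes \pi^* L^k$ for $k \gg 0$.
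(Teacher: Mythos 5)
Your proposal does not prove the statement at hand. The statement is Wang's theorem (Theorem \ref{thm5}): the equivalence between Gieseker stability of $E$ and the existence of balanced metrics $h^{(k)}$ on $E\otimes L^k$ for $k\gg 0$, together with the identification of any $C^\infty$-limit $h_\infty$ of $h_k=h^{(k)}\otimes g_\infty^{\otimes(-k)}$ as a solution of \eqref{eq10}, and the converse. What you have written is instead an outline of the proof of the paper's main result, Theorem \ref{thm2b}, on balanced metrics on $(\mathbb{P}E^*,\mathcal{O}_{\mathbb{P}E^*}(d)\otimes\pi^*L^k)$. Indeed your argument explicitly \emph{invokes} Theorem \ref{thm5} applied to $\textrm{Sym}^d E$ in order to obtain the balanced metrics $H^{(k)}$ on $\textrm{Sym}^d E\otimes L^k$, so read as a proof of Theorem \ref{thm5} it is circular, and read on its own terms it is a proof of a different proposition.

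None of the actual content of Wang's theorem is addressed: the direction from existence of balanced embeddings of $X$ into the Grassmannian $G(r,H^0(X,E\otimes L^k)^*)$ to Gieseker stability (a finite-dimensional GIT/Kempf--Ness argument identifying zeros of the relevant moment map with stable orbits of the Gieseker point); the converse construction of balanced metrics from stability for $k\gg 0$; and the asymptotic analysis, via the Catlin--Tian--Yau--Zelditch expansion \eqref{eq1}, showing that the limit of the rescaled balanced metrics must satisfy \eqref{eq10} and, conversely, that a solution of \eqref{eq10} can be perturbed to genuine balanced metrics. Note that the paper itself gives no proof of Theorem \ref{thm5}; it is quoted from \cite{W} and \cite[Theorem 1.2]{W2} and used as a black box. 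For what it is worth, as an outline of Theorem \ref{thm2b} your sketch does follow the paper's strategy (stability of $\textrm{Sym}^d E$, the fiberwise integration controlled by Proposition \ref{prop1} and Proposition \ref{prop7}, the $O(k^{-\infty})$ convergence from Theorem \ref{thm6}, the almost balanced property of Theorem \ref{thm7}, and the Donaldson/Phong--Sturm perturbation from \cite{S}), but that is not the statement you were asked to prove.
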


In the case that the base manifold $X$ has dimension one and the K\"ahler metric $\omega_{\infty}$ has constant curvature, we  prove that the rate of convergence of $h_{k}$ to $h_{\infty}$ is  $O(k^{-\infty})$.

\begin{theorem}\label{thm6}
Let $X$ be a compact Riemann surface and $\omega_{\infty}$ be a
K\"ahler form of constant curvature on $X$. Let $a$ be a positive integer. Suppose that the Hermitian metric $h_{\infty}$ on $E$ satisfies the Hermitian-Einstein equation $$\frac{i}{2\pi}  F_{(E,h_{\infty})}=\omega_{\infty} I_{E}.$$ 
Let $h^{(k)}$ be a sequence of balanced metric on $E \otimes L^k$ for $k \gg 0$ and $h_{k}=h^{(k)} \otimes g_{\infty}^{\otimes (-k)}$. If $h_{k}\rightarrow h_{\infty}$, then $$||h_{k}-h_{\infty}||_{C^a(h_{\infty})}=O(k^{-\infty}).$$ 

\end{theorem}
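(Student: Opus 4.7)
The plan is to combine the Catlin--Tian--Yau--Zelditch asymptotic expansion with a one-shot elliptic bootstrap, exploiting the fact that in the special geometric setting of the theorem the reference metric $h_\infty$ is already balanced to infinite order. First I would analyze the coefficients of
$$B(h_\infty \otimes g_\infty^{\otimes k}, \omega_\infty) = k + A_1(h_\infty) + \sum_{i=2}^{N} A_i(h_\infty)\, k^{1-i} + O(k^{-N}),$$
given by \eqref{eq1}. The $A_i(h)$ are universal endomorphism-valued polynomial expressions in $F_{(E,h)}$, in the base curvature $R_{\omega_\infty}$, and in their iterated covariant derivatives. Since $\omega_\infty$ has constant curvature and $F_{(E,h_\infty)} = \mu\,\omega_\infty\, I_E$ for a constant $\mu$, both curvature tensors are covariantly constant and proportional to the identity; hence $A_i(h_\infty) = \alpha_i I_E$ for global constants $\alpha_i$. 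Integrating the trace of the expansion against $\omega_\infty$ yields $\dim H^0(X,E\otimes L^k)$, which is a polynomial in $k$ by Riemann--Roch once $k \gg 0$. Matching coefficients forces $\alpha_i = 0$ for $i \geq 2$, giving the almost-balanced identity
$$B(h_\infty \otimes g_\infty^{\otimes k}, \omega_\infty) = c_k I_E + O(k^{-\infty}) \quad \text{in } C^a,$$
for every $a$, where $c_k = \dim H^0(X, E\otimes L^k)/(rV)$.

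Next I would set $\phi_k := h_\infty^{-1}(h_k - h_\infty)$, which tends to $0$ in $C^\infty$ by hypothesis. Subtracting the identity above from the exact balanced condition $B(h_k \otimes g_\infty^{\otimes k}, \omega_\infty) = c_k I_E$ gives
$$B(h_k \otimes g_\infty^{\otimes k}, \omega_\infty) - B(h_\infty \otimes g_\infty^{\otimes k}, \omega_\infty) = O(k^{-\infty}).$$
Applying the CTYZ expansion at both metrics, using the uniform $C^\infty$ bounds on $h_k$ to control the remainders uniformly in $k$, the left-hand side expands as $L(\phi_k) + O(k^{-1}\|\phi_k\|_{C^{a+2}}) + O(\|\phi_k\|^2)$, where $L$ is the linearization at $h_\infty$ of $h \mapsto A_1(h) = \tfrac{i}{2\pi}\Lambda F_{(E,h)} + \tfrac{1}{2}S(\omega_\infty)I_E$. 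This $L$ is an elliptic, self-adjoint operator on Hermitian endomorphisms of $E$; its kernel is $\mathbb{R}\cdot I_E$, by the simplicity of the Mumford-stable bundle $E$ together with the fact that rescaling $h$ by a positive constant leaves $B$ invariant.

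Inverting $L$ on the orthogonal complement of its kernel (the scalar direction being fixed by the normalization built into $c_k$), and absorbing the quadratic remainder using $\|\phi_k\|_{C^a}\to 0$, I obtain a closed inequality of the form $\|\phi_k\|_{C^a} \leq C_N k^{-N} + C\,k^{-1}\|\phi_k\|_{C^{a+2}} + C\|\phi_k\|_{C^a}^2$ for every $N$. Since $\|\phi_k\|\to 0$, for $k$ large this yields $\|\phi_k\|_{C^a} = O(k^{-N})$, and $N$ being arbitrary gives $O(k^{-\infty})$.

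The main obstacle is twofold: first, justifying that every higher coefficient $A_i(h_\infty)$ is a scalar multiple of the identity in the vector-bundle CTYZ expansion (subtler than in the scalar case, since the coefficients involve contractions of curvature endomorphisms with themselves and with parallel tensors); and second, tracking the $k$-dependence in the remainder of the Bergman expansion and in the quadratic term carefully enough that the single-step bootstrap closes uniformly in $k$. The uniform $C^\infty$-boundedness of $\{h_k\}$ guaranteed by the hypothesis is what makes both issues tractable.
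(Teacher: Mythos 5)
Your proposal follows, in substance, the same route as the paper. The first half is exactly the content of Lemma~\ref{lem5} and the opening of Lemma~\ref{lem4}: constant base curvature and the Hermitian--Einstein condition make every coefficient $A_{i}(h_{\infty})$ parallel, and simplicity of the stable bundle then forces $A_{i}(h_{\infty})=\alpha_{i}I_{E}$, so that $B(h_{\infty}\otimes g_{\infty}^{\otimes k},\omega_{\infty})$ differs from $\frac{\chi(k)}{rV}I_{E}$ by $O(k^{-q-1})$ in $C^{a}$ for every $q$. (Your extra observation that $\alpha_{i}=0$ for $i\geq 2$ by comparison with the Hilbert polynomial is correct but not needed.) Where you differ is in the perturbation step: the paper does not linearize on the given sequence $h_{k}$, but instead invokes Wang's quantitative argument (\cite[page 276]{W2}) to \emph{construct} balanced metrics within $O(k^{3+\frac{13n}{2}+\frac{a}{2}-q})$ of $h_{\infty}$, and then identifies these with the given sequence via Theorem~\ref{thm5}; your direct bootstrap is essentially Wang's argument unpacked, which spares you the appeal to uniqueness but obliges you to justify the uniformity of the Bergman expansion in $h$ and to close the loss-of-derivatives inequality $\|\phi_{k}\|_{C^{a}}\leq C_{N}k^{-N}+Ck^{-1}\|\phi_{k}\|_{C^{a+2}}+C\|\phi_{k}\|_{C^{a}}^{2}$ (which does close, by induction on the power of $k$ at each fixed derivative order, given the a priori $C^{\infty}$ bounds). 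One genuine soft spot: the Bergman kernel is invariant under the rescaling $h\mapsto\lambda h$, so your linearized operator $L$ annihilates the $I_{E}$-direction and the equation only controls the trace-free part of $\phi_{k}$; the claim that ``the normalization built into $c_{k}$'' pins down the scalar component is not correct as stated, and the component $\frac{1}{rV}\int \mathrm{tr}\,\phi_{k}$ must be fixed by the normalization implicit in the definition of a balanced metric as $\iota^{*}h_{\textrm{FS}}$ (a point the paper's own formulation also leaves tacit).
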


The proof follows from Theorem \ref{thm5}, lemma \ref{lem4} and lemma \ref{lem5}.

\begin{lemma}\label{lem4}

Let $h$ be a Hermitian metric on $E$. Suppose that $E$ is stable and coefficients  $A_{1},\dots, A_{q}$ in the asymptotic expansion \eqref{eq1} are constant endomorphisms of $E$. If
$q$ is big enough, then there exists a sequence of balanced
metrics $h^{(k)}$ on $E \otimes L^k$ for $k \gg 0$ such that
$$||h-h^{(k)} \otimes g^{\otimes(-k)}||_{C^a(h)}=
O(k^{3+\frac{13n}{2}+\frac{a}{2}-q}).$$

\end{lemma}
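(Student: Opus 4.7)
My plan is to view the balanced condition as a finite-dimensional moment-map equation and solve it by a quantitative perturbation of the almost-balanced metric $h \otimes g^k$, in the spirit of Donaldson and Wang.

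First, I would use the Catlin--Tian--Yau--Zelditch expansion \eqref{eq1}, together with the hypothesis that $A_1,\dots,A_q$ are scalar multiples of $I_E$, to write the Bergman kernel of $h \otimes g^k$ as $B(h \otimes g^k,\omega_0) = \alpha_k\, I_E + R_k$, where $\alpha_k = k^n + c_1 k^{n-1}+\dots+c_q k^{n-q}$ is a scalar polynomial in $k$ and the remainder satisfies $\|R_k\|_{C^a(h)} = O(k^{n-q-1})$. Thus $h \otimes g^k$ is almost balanced, with the trace-free part of its Bergman kernel of size $O(k^{n-q-1})$. Setting $V_k := H^0(X, E \otimes L^k)$ with $N_k = \dim V_k \sim k^n$, the balanced condition on a metric in the $SL(V_k)$-orbit of $h\otimes g^k$ is equivalent to the vanishing of the Donaldson moment map $\mu$ on the space of ordered bases with its natural $SU(V_k)$ action, and the initial basis $\underline{s}^{(0)}$ supplied by $h\otimes g^k$ therefore satisfies $\|\mu(\underline{s}^{(0)})\|_{\mathrm{op}} = O(k^{n-q-1})$ after the standard $L^2$-versus-pointwise comparison via peak sections.

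The crucial analytic step is a quantitative invertibility of the linearization of $\mu$ along the $SL(V_k)$-orbit. Its kernel consists of infinitesimal automorphisms of $(E,h)$, which is trivial because $E$ is stable, hence simple. I would promote this qualitative injectivity to a lower bound of the form $ck^{-B}$ on the smallest nonzero eigenvalue of $d\mu\, d\mu^*$ by combining a Bochner/Kodaira argument on $\mathrm{End}(E) \otimes L^k$, Bergman-kernel $L^2 \to C^a$ estimates (contributing powers $k^{n/2+a/2}$), and a Poincaré-type inequality whose constant degrades polynomially in $k$. It is precisely this bookkeeping that contributes the dimensional exponent $\frac{13n}{2}$ and the additive constant $3$.

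Given such a spectral gap, a quantitative inverse function theorem -- equivalently the gradient flow of $|\mu|^2$ with a Lojasiewicz-type estimate as in Phong--Sturm and \cite{S} -- produces a zero $\underline{s}^{(k)}$ of $\mu$ within distance $O(k^{B+n-q-1})$ of $\underline{s}^{(0)}$, which is small whenever $q$ is large enough. Translating this basis change back to a Hermitian metric $h^{(k)}$ on $E\otimes L^k$, dividing out $g^k$ to land on $E$, and converting from the operator-norm distance on bases to the $C^a(h)$ distance on metrics absorbs a further factor of order $k^{a/2}$, yielding the stated rate $O(k^{3+\frac{13n}{2}+\frac{a}{2}-q})$. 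The main obstacle will be the quantitative spectral gap: stability by itself gives only qualitative injectivity, and one has to track carefully how each Bergman-kernel, Sobolev and Poincaré constant scales with $k$ to produce the precise polynomial exponent -- any slack there would destroy the $O(k^{-\infty})$ rate needed in Theorem \ref{thm6}.
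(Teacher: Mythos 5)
Your proposal follows essentially the same route as the paper: the first step (using stability/simplicity to force the constant coefficients $A_1,\dots,A_q$ to be scalars, so that $B(h\otimes g^k,\omega_0)$ equals a scalar times $I_E$ up to an error of size $O(k^{n-q-1})$ in $C^a$) is exactly the paper's argument, and the paper then delegates the quantitative perturbation from almost-balanced to balanced, including the exponent $3+\tfrac{13n}{2}+\tfrac{a}{2}-q$, to Wang's argument in \cite[p.~276]{W2}, which is precisely the moment-map/spectral-gap machinery you sketch. The only caveat is that your second and third paragraphs describe rather than prove that quantitative step, but the paper does no more, so nothing is missing relative to its own proof.
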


\begin{proof}
 First we claim that $$B_{k}(h)=\frac{\chi(k)}{rV}(I_{E}+\sigma_{k}),$$
where $||\sigma_{k}||_{C^{a}}=O(k^{n-q-1}).$ In order to prove this, we observe that there exists a smooth
section  $A(x)$ of $End(E)$ such that
$$B_{k}(h)=
k^n+A_{1}k^{n-1}+....+A_{q}k^{n-q}+A(x)k^{n-q-1}.$$ The bundle $E$
is stable and $A_{j}$'s are constant sections of $End(E)$.
Therefore there exist numbers $a_{1},...,a_{q}$ such that
$A_{j}=a_{j}I_{E}.$ On the other hand
$$\int_{X} tr(B_{k}(h) \frac{\omega_{\infty}^n}{n!}=\chi(k)V,$$
where $V= \int_{X}  \frac{\omega_{\infty}^n}{n!}.$ Thus, $$B_{k}(h)- \frac{\chi(k)}{rV}I_{E}=
\Big(A(x)-\frac{1}{rV}\int_{X} A(x) I_{E}\Big)k^{n-q-1}.$$

Define $\sigma_{k}=\Big(A(x)-\frac{1}{rV}\int_{X} A(x) I_{E}\Big)k^{n-q-1}$, we have
$$B_{k}(h)=\frac{\chi(k)}{rV}(I+\sigma_{k}),$$
where $||\sigma_{k}||_{C^{a}}=O(k^{n-q-1}).$ Now Wang's argument
(\cite [page 276]{W2}) concludes the proof.

\end{proof}

\begin{lemma}\label{lem5}

In the situation of Theorem \ref{thm6}, all coefficients $A_{i}$'s
are constant.

\end{lemma}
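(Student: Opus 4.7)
The plan is to exploit the universal local structure of the Bergman kernel coefficients. Recall that, by the Catlin--Lu--Tian--Zelditch asymptotic expansion in its vector bundle form (see Wang and Ma--Marinescu), each endomorphism $A_i(h)\in \Gamma(X,\mathrm{End}(E))$ appearing in \eqref{eq1} is given pointwise by a universal polynomial expression in the curvature $F_{(E,h)}$ of the Chern connection, the Ricci and scalar curvatures of $\omega_\infty$, and their covariant derivatives (suitably contracted with the K\"ahler form). This step is not a calculation but a citation of the structure of the coefficients in the expansion.

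Next, I would feed the hypotheses of Theorem \ref{thm6} into this universal expression. The Hermitian--Einstein equation $\frac{i}{2\pi}F_{(E,h_\infty)} = \omega_\infty I_E$ says that $F_{(E,h_\infty)}$ is the parallel scalar $-2\pi i$ times $I_E$ times $\omega_\infty$, so in particular $\nabla F_{(E,h_\infty)}=0$ for the Chern connection. Since $\omega_\infty$ has constant curvature on the Riemann surface $X$, the Ricci and scalar curvature of $\omega_\infty$ are parallel with respect to the Levi--Civita connection. Hence every geometric building block entering the formula for $A_i$ is covariantly constant, and every term in the polynomial involving a nontrivial covariant derivative vanishes identically.

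What remains is a polynomial expression in parallel sections (each a scalar multiple of $I_E$), so each $A_i(h_\infty)$ is itself a parallel section of $\mathrm{End}(E)$, indeed a constant multiple of $I_E$. This is exactly the ``constant endomorphism'' hypothesis needed to invoke Lemma \ref{lem4}.

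The only real obstacle is the first step, namely verifying the universal polynomial structure of the $A_i$ in curvatures and their covariant derivatives; once that structure is accepted from Ma--Marinescu (or the treatment in \cite{W}), the conclusion follows immediately from the parallel nature of the data in the Hermitian--Einstein, constant curvature setting. No Riemann surface specific computation beyond $\dim_{\mathbb{C}} X=1$ is required for the argument itself, but the one dimensional base is what makes the constant scalar curvature assumption compatible with Hermitian--Einstein via a simple normalization.
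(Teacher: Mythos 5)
Your proposal is correct and follows essentially the same route as the paper: cite the universal polynomial structure of the Bergman kernel coefficients in the curvatures and their covariant derivatives, then observe that the Hermitian--Einstein and constant-curvature hypotheses make all of these building blocks parallel, so each $A_i$ is a constant (multiple of $I_E$). Your version merely spells out in more detail why the curvature data are covariantly constant, which the paper states in one line.
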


\begin{proof}
The coefficients of the asymptotic expansion of the Bergman kernel
are polynomials of the curvature tensor on the base manifold,
curvature tensor on the bundle and their covariant derivatives.
The whole curvature tensors on the base manifold and on the bundle
are constant. Therefore all coefficients are constant.

\end{proof}

\section{Constructing Almost Balanced Metrics }

The goal of this section is to prove Theorem \ref{thm2b}. In order to prove Theorem \ref{thm2b}, we construct  a sequence of almost balanced metrics on $\mathcal{O}_{\mathbb{P}E^*}(d) \otimes L^k$(Theorem \ref{thm7}). We start with definition of balanced metrics on polarized manifolds.

Let $(Y,\omega)$ be a compact K\"ahler manifold of dimension
$n$ and $\mathcal{O}(1)\rightarrow Y$ be a very ample line bundle
on $Y$ equipped with a Hermitian metric $\sigma$ such that
$i\bar{\partial}\partial \log \sigma=\omega$. Since $\mathcal{O}(1)$ is very ample,
using global sections of $\mathcal{O}(1)$, we can embed $Y$ into
$\mathbb{P}(H^0(Y,\mathcal{O}(1))^*)$. A choice of ordered basis
$\underline{s}=(s_{1},...,s_{N})$ of $H^0(Y,\mathcal{O}(1))$ gives
an isomorphism between $\mathbb{P}(H^0(Y,\mathcal{O}(1))^*)$ and
$\mathbb{P}^{N-1}$. Hence for any such $\underline{s}$, we have an
embedding $\iota_{\underline{s}}:Y\hookrightarrow
\mathbb{P}^{N-1}$ such that $\iota_{\underline{s}}^*
\mathcal{O}_{\mathbb{P}^N}(1)=\mathcal{O}(1)$. Using
$\iota_{\underline{s}}$, we can pull back the Fubini-Study metric
and K\"ahler form of the projective space to $\mathcal{O}(1)$ and
$Y$ respectively.

\begin{definition}\label{def2n}

An embedding $\iota_{\underline{s}}$ is called balanced if
$$\int_{Y} \langle  s_{i}, s_{j}
\rangle_{\iota_{\underline{s}}^*h_{\textrm{FS}}}\frac{\iota_{\underline{s}}^*\omega_{\textrm{FS}}^n}{n!}=\frac{V}{N}\delta_{ij},$$
where $V=\int_{Y}\omega^n/n!$. A Hermitian
metric (resp.\ a K\"ahler form) is called balanced if it is
the pull back $\iota^*_{\underline{s}}h_{\textrm{FS}}$ 
(resp.\ $\iota^*_{\underline{s}}\omega_{\textrm{FS}}$) where
$\iota_{\underline{s}}$ is a balanced embedding.

\end{definition}

\begin{remark}
The concepts of balanced metric on holomorphic vector bundles (Definition \ref{def1n}) and balanced metric on polarized manifolds (Definition \ref{def2n}) are different. In order to find a balanced metric on a holomorphic vector bundle $E \rightarrow X$, we need to fix a K\"ahler form $\omega_{0}$ on $X$. A Hermitian metric $h$ on $E$ is balanced (more precisely $\omega_{0}$-balanced) if $B(h, \omega_{0})= C I_{E}$, where $C$ is a constant. But in order to find a balanced metric on a polarized manifold $(Y,\mathcal{O}(1))$, we do not need to fix a K\"ahler form. A positive Hermitian metric $g$ on $\mathcal{O}(1)$ is balanced if $B(g, i\bar{\partial}\partial \log g)$ is constant.

\end{remark}

\begin{definition}

A sequence of Hermitian metrics $h_{k} $ on
$\mathcal{O}(1)\otimes L^{k}$ and ordered bases
$\underline{s}^{(k)}=(s_{1}^{(k)},...,s_{N}^{(k)})$ for
$H^0(Y,\mathcal{O}(1)\otimes L^{k})$ is called \emph{almost
balanced of order $q$} if for any $k$  $$\sum
|s_{i}^{(k)}|_{h_{k}}^2=1$$ and
$$\int_{Y}\langle s_{i}^{(k)}, s_{j}^{(k)} \rangle
_{h_{k}}dvol_{h_{k}}=D^{(k)}\delta_{ij}+M^{(k)}_{ij},$$ where
$D^{(k)}$ is a scalar so that $D^{(k)} \rightarrow C$ as
$k \rightarrow \infty$, where $C$ is a constant  and
$M^{(k)}$ is a trace-free Hermitian matrix such that
$||M^{(k)}||_{\textrm{op}}=O(k^{-q-1})$. Here  $||M^{(k)}||_{op}$ is the operator norm of the matrix $M^{(k)}$.

\end{definition}

For the rest of this section, let  $X$ be a compact Riemann surface and $L$ be an ample line bundle on $X$. Let $g$ be a positive  Hermitian metric on $L$  and $\omega_{\infty}=i \bar{\partial}\partial\log g $ be a K\"ahler form on $X$.
Let $E$ be a holomorphic vector bundle on $X$ of rank $r$ and
slope $\mu$. The slope of $E$ is defined by $\mu=\frac{\deg(E)}{r}$.

Similar to the case of vector spaces, we have the natural
isomorphism $H^{0}(\mathbb{P}E^*,\mathcal{O}_{\mathbb{P}E^*}(d)\otimes L^k)=
H^{0}(X,\textrm{Sym}^d E \otimes L^k)$. Also, any Hermitian metric $H$ on $\textrm{Sym}^d E$ induces a Hermitian
metric $\widehat{H}$ on $\mathcal{O}_{\mathbb{P}E^*}(d) \otimes L^k$.

Suppose that $H$ is a Hermitian metric on $\textrm{Sym}^d E$ and $s_{1},...,s_{N}$ is an orthonormal basis for
$H^{0}(X,\textrm{Sym}^d E \otimes L^k)$ with respect to $L^2(H_{k} ,\omega_{\infty})$, where $H_{k}= H\otimes g^{\otimes k}$. Let $\hat{s_{1}},...,\hat{s_{N}}$ be the
corresponding basis for
$H^{0}(\mathbb{P}E^*,\mathcal{O}_{\mathbb{P}E^*}(d))$. 

We prove that the matrix  $[\int_{\mathbb{P}E^*}
\langle\widehat{s_{i}},\widehat{s_{j}}\rangle_{\widehat{H}}dvol_{\widehat{H}}]$
is close to a scalar matrix. More precisely, we prove the following.

\begin{proposition}\label{prop7}
Let $h_{\infty}$ be a Hermitian-Eienstein metric on $E$, i.e. 
\begin{equation} \label{eq2} iF_{(\bar{\partial}_{E}, h_{\infty})}=\mu \omega_{\infty} I_{ E},\end{equation} where $F_{(\bar{\partial}_{ E}, h_{\infty})}$ is the curvature of the chern connection of  $h_{\infty}$ and $\mu$ is the slope of $E$. Then there exists a constant $C$ depends only on $r$ and $d $ such that if
\begin{equation} \label{eq3}||H-\textrm{Sym}^d h_{\infty}||_{C^2(\textrm{Sym}^d h_{\infty})}\leq \min( \epsilon, \frac{1}{2}),\end{equation}  then
$$\Big|\int_{\mathbb{P}E^*}
\langle\widehat{s_{i}},\widehat{s_{j}}\rangle_{\widehat{H_{k}}}dvol_{\widehat{H_{k}}}-C_{r,d}(d\mu+k)
\delta_{ij}\Big|\leq C\epsilon (d\mu+k).$$ Here $H_{k}= H \otimes g^{\otimes k}$.

\end{proposition}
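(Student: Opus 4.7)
The plan is to reduce the integral on $\mathbb{P}E^*$ to a fiberwise Fubini--Study computation (handled by Proposition~\ref{prop1}) followed by a base integral that collapses through the $L^2$-orthonormality of $\{s_i\}$. The crucial feature of the one-dimensional base is $(\pi^*\omega_\infty)^2=0$, which forces $\omega_{\widehat{H_k}}^r$ to be linear in $k$ and allows the coefficient $(d\mu+k)$ to emerge directly from the Hermitian--Einstein equation.

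First I would settle the idealized case $H=\textrm{Sym}^d h_\infty$. By Lemma~\ref{lem1} one has $\omega_{\widehat{H}}=d\,\omega_{FS,h_\infty}$. A direct Chern form computation in a unitary frame normal at $x\in X$, combined with the Hermitian--Einstein equation~\eqref{eq2}, yields the pointwise splitting
\[ \omega_{FS,h_\infty}=\omega_{FS}^{\mathrm{vert}}+\mu\,\pi^*\omega_\infty, \]
where $\omega_{FS}^{\mathrm{vert}}$ restricts to the fiberwise Fubini--Study form. Expanding $(d\omega_{FS,h_\infty}+k\pi^*\omega_\infty)^r$ and using both $(\pi^*\omega_\infty)^2=0$ and the vanishing of $(\omega_{FS}^{\mathrm{vert}})^r$ on the $(r-1)$-dimensional fiber collapses the top form to
\[ \frac{\omega_{\widehat{\textrm{Sym}^d h_\infty\otimes g^k}}^r}{r!}= d^{r-1}(d\mu+k)\,\frac{\omega_{FS,h_\infty}^{r-1}}{(r-1)!}\wedge\pi^*\omega_\infty. \]
Lemma~\ref{lem2} applied on the fiber over $x$ (with the scalar factor $g^k(x)$ passing through) evaluates the inner fiber integral to $d^{1-r}C_{r,d}\langle s_i(x),s_j(x)\rangle_{H_k}$; multiplying by $d^{r-1}(d\mu+k)$ and integrating over $X$ produces the exact value $C_{r,d}(d\mu+k)\delta_{ij}$ by the $L^2$-orthonormality of $\{s_i\}$.

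For the actual hypothesis that $H$ is only $\epsilon$-close in $C^2$ to $\textrm{Sym}^d h_\infty$, I would keep the model volume-form identity above and replace the exact fiber integration by its perturbed version. Concretely, Lemma~\ref{lem7} gives $\|\omega_{\widehat{H}}-d\,\omega_{FS,h_\infty}\|_{C^0}\le C\epsilon$, and Lemma~\ref{lem8} converts this into a $1+O(\epsilon)$ multiplicative distortion of $\omega_{\widehat{H_k}}^r$ against the model $\omega_0^r$; fiberwise, Proposition~\ref{prop1} replaces Lemma~\ref{lem2} by itself plus an error bounded by $C\epsilon\,|s_i(x)|_{H_k}|s_j(x)|_{H_k}$. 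Because $(\pi^*\omega_\infty)^2=0$, the form $\omega_0^r$ is $O(k)$ rather than $O(k^r)$ pointwise, so each of the three error sources contributes $O(\epsilon(d\mu+k))$ to the matrix coefficient, and never $O(\epsilon k^a)$ with $a\ge 2$. Assembling them with Cauchy--Schwarz and the $L^2$-unit normalization of the $s_i$ yields the stated bound.

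The main obstacle I anticipate is not any single step but the bookkeeping in the perturbation argument: verifying that every error term inherits the correct power of $k$ with no hidden higher-order contamination. This is exactly where the one-dimensional hypothesis on $X$ enters --- in higher dimensions the collapse of $\omega_{\widehat{H_k}}^r$ would be replaced by genuine polynomial growth in $k$, and each of the three perturbations above would require a more delicate fiber-by-fiber control.
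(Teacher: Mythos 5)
Your proposal is correct and follows essentially the same route as the paper: the Hermitian--Einstein equation collapses $\omega_k^r$ to $r(d\mu+k)\,\omega_0^{r-1}\wedge\pi^*\omega_\infty$ (using that the base is a curve), Proposition~\ref{prop1} controls the fiberwise integrals with error $C\epsilon|s_i|_{H_k}|s_j|_{H_k}$, Lemmas~\ref{lem7} and~\ref{lem8} handle the discrepancy between $dvol_{\widehat{H_k}}$ and $\omega_k^r/r!$, and Cauchy--Schwarz plus $L^2$-orthonormality finish the estimate. The only cosmetic difference is that you make the vertical/horizontal splitting of $\omega_{FS,h_\infty}$ explicit where the paper simply asserts $\omega_{\widehat{h_\infty}}^r=r\mu\,\omega_{\widehat{h_\infty}}^{r-1}\wedge\omega_\infty$ as a direct calculation.
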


\begin{proof}

In this proof $C$ denotes a constant depends only on $r$ and $d$ that might change from line to line. 
Define $H_{\infty}=\textrm{Sym}^d h_{\infty}$, $\omega_{0}= i\bar{\partial}\partial\log \widehat{H_{\infty}}$ and $\omega_{k}= \omega_{0}+ k\omega_{\infty}$. Lemma \ref{lem1} implies that $\widehat{H_{\infty}}=\widehat{h_{\infty}}^{\otimes d}$ and therefore $\omega_{0}=d i\bar{\partial}\partial\log \widehat{h_{\infty}}=d \omega_{\widehat{h_{\infty}}} $.
A simple calculation shows that $\omega_{\widehat{h_{\infty}}}^r= r\mu \omega_{\widehat{h_{\infty}}}^{r-1} \wedge \omega_{\infty}$, since $h_{\infty }$ satisfies the Hermitian-Einstein equation \eqref{eq2}. Thus, 
\begin{equation} \label{eq4}\omega_{k}^r= \omega_{0}^r+rk\omega_{0}^{r-1} \wedge \omega_{\infty}=r(d\mu+k) \omega_{0}^{r-1} \wedge \omega_{\infty}.\end{equation} Therefore, 
\begin{align*}\Big|   \int_{\mathbb{P}E^*} \langle\widehat{s_{i}},\widehat{s_{j}}\rangle_{\widehat{H_{k}}} \frac{\omega_{k}^r}{r!}-C_{r,d}(d\mu+k)\delta_{ij} \Big|&=\Big|   (d\mu+k) \int_{\mathbb{P}E^*} \langle\widehat{s_{i}},\widehat{s_{j}}\rangle_{\widehat{H_{k}}} \frac{\omega_{0}^{r-1}}{(r-1)!}\wedge \omega_{\infty}-C_{r,d}(d\mu+k)\delta_{ij} \Big| \\&=(d\mu+k) \Big|    \int_{X}\Big(d^{r-1}\int_{\mathbb{P}E_{x}^*} \langle\widehat{s_{i}},\widehat{s_{j}}\rangle_{\widehat{H_{k}}} \frac{\omega_{\widehat{h_{\infty}}}^{r-1}}{(r-1)!}-C_{r,d}\delta_{ij} \Big)\wedge \omega_{\infty} \Big| \\&\leq  C(d\mu+k) \epsilon \int_{X}  |s_{i}|_{H_{k}} |s_{j}|_{H_{k}} \omega_{\infty}.\end{align*}

The last inequality follows from Proposition \ref{prop1}. Hence  Cauchy-Scwarz inequality implies that 

$$\Big|   \int_{\mathbb{P}E^*} \langle\widehat{s_{i}},\widehat{s_{j}}\rangle_{\widehat{H_{k}}} \frac{\omega_{k}^r}{r!}-C_{r,d}(d\mu+k)\delta_{ij} \Big| \leq C  (d\mu+k) \epsilon \Big(\int_{X}  |s_{i}|^{2}_{H_{k}} \omega_{\infty}\Big)^{\frac{1}{2}}\Big(\int_{X}  |s_{j}|^{2}_{H_{k}} \omega_{\infty}\Big)^{\frac{1}{2}}=C (d\mu+k) \epsilon, $$
since  $s_{1},...,s_{N}$ is an orthonormal basis for $H^{0}(X,\textrm{Sym}^d E \otimes L^k)$ with respect to $L^2(H_{k},\omega_{\infty})$. 

On the other hand,  Lemma \ref{lem7} implies that $||\omega-\omega_{0}||_{C^0(\omega_{0})} \leq C\epsilon $. Therefore,  
$$||(\omega+ k\omega_{\infty})-\omega_{k}||_{C^0(\omega_{k})}=||(\omega+ k\omega_{\infty})-(\omega_{0}+k\omega_{\infty})||_{C^0(\omega_{k})} \leq ||\omega-\omega_{0}||_{C^0(\omega_{0})} \leq C\epsilon,$$  since $\omega_{\infty}$ is a semipositive $(1,1)$-form on $\mathbb{P}E^*$.
Applying Lemma \ref{lem8} implies that \begin{equation} \label{eq5} \Big| dvol_{\widehat{H_{k}}}-   \frac{\omega_{k}^r}{r!}  \Big| \leq C\epsilon \frac{\omega_{k}^r}{r!}.\end{equation} 
Thus,  \begin{align*}\Big|\int_{\mathbb{P}E^*} \langle\widehat{s_{i}},\widehat{s_{j}}\rangle_{\widehat{H_{k}}}dvol_{\widehat{H_{k}}}-\int_{\mathbb{P}E^*} \langle\widehat{s_{i}},\widehat{s_{j}}\rangle_{\widehat{H_{k}}}\frac{\omega_{k}^r}{r!}\Big |&\leq \int_{\mathbb{P}E^*}| \langle\widehat{s_{i}},\widehat{s_{j}}\rangle_{\widehat{H_{k}}}| |dvol_{\widehat{H_{k}}}-\frac{\omega_{k}^r}{r!}| \\&\leq C\epsilon \int_{\mathbb{P}E^*}  |s_{i}|_{H_{k}} |s_{j}|_{H_{k}}  \frac{\omega_{k}^r}{r!}\\& \leq C(d\mu+k)\epsilon \int_{\mathbb{P}E^*}|s_{i}|_{H_{k}} |s_{j}|_{H_{k}}  \frac{\omega_{0}^{r-1}}{(r-1)!}\wedge \omega_{\infty}\\&=C(d\mu+k)\epsilon \int_{X}|s_{i}|_{H_{k}} |s_{j}|_{H_{k}}  \omega_{\infty} \\&\leq C\epsilon  \Big(\int_{X}  |s_{i}|^{2}_{H_{k}} \omega_{\infty}\Big)^{\frac{1}{2}}\Big(\int_{X}  |s_{j}|^{2}_{H_{k}} \omega_{\infty}\Big)^{\frac{1}{2}} \leq C\epsilon. \end{align*}

Here we used \eqref{eq4}, \eqref{eq5} and the fact $\sup_{\mathbb{P}E_{x}^*}|\widehat{s_{i}}|_{\widehat{H_{k}}} = |s_{i}(x)|_{H_{k}}$.  We have

\begin{align*}\Big|&\int_{\mathbb{P}E^*}
\langle\widehat{s_{i}},\widehat{s_{j}}\rangle_{\widehat{H_{k}}}dvol_{\widehat{H_{k}}}- C_{r,d}(d\mu+k)
\delta_{ij}\Big| \\&\leq \int_{\mathbb{P}E^*}| \langle\widehat{s_{i}},\widehat{s_{j}}\rangle_{\widehat{H_{k}}}| |dvol_{\widehat{H_{k}}}-\frac{\omega_{k}^r}{r!}|+ \Big|    \int_{\mathbb{P}E^*} \langle\widehat{s_{i}},\widehat{s_{j}}\rangle_{\widehat{H_{k}}} \frac{\omega_{k}^r}{r!}-C_{r,d}(d\mu+k)\delta_{ij} \Big|\\& \leq C(d\mu+k)\epsilon.\end{align*}


\end{proof}


\begin{theorem}\label{thm7}
Let $X$ be a compact Riemann surface and $L \rightarrow X$ be an ample line bundle equipped with a Hermitian metric $g$. Suppose that  $i\bar{\partial}\partial\log g =\omega_{\infty}$ is a
K\"ahler form on $X$. Let $E$ be a stable holomorphic vector bundle of rank $r$ on $X$ and $h_{\infty}$ is a Hermitian-Einstein metric on $E$. 
Let $R$ be the rank of $\textrm{Sym}^d E$. Suppose that $\{s_{i}^{(k)}\}_{i=1}^{N_{k}}$ is a sequence of bases for $H^0(X,\textrm{Sym}^d E
\otimes L^{\otimes k}) $ and $H^{(k)}$ is a sequence of Hermitian metrics on $\textrm{Sym}^d E
\otimes L^{\otimes k}$ such that
\begin{align*}&\sum_{i=1}^{N_{k}} s_{i}^{(k)} \otimes
(s_{i}^{(k)})^{*_{H^{(k)}}}=I_{\textrm{Sym}^d E
\otimes L^k},\\&\int_{X} \langle
s_{i}^{(k)},s_{j}^{(k)}\rangle_{H^{(k)}} \omega_{\infty}
=\frac{RVol(X,\omega_{\infty})}{N_{k}} \delta_{ij}, \\& ||H^{(k)} \otimes g^{\otimes (-k)}-\textrm{Sym}^d h_{\infty}||_{C^2(\textrm{Sym}^d h_{\infty})}=O(k^{-\infty}).\end{align*}
Then the sequence of Hermitian metrics $H^{(k)}$ on $\mathcal{O}_{\mathbb{P}E^*}(d)\otimes L^k$ and ordered bases $\underline{s}^{(k)}=
( \widehat{s_{1}^{(k)}},\dots , \widehat{s_{N_{k}}^{(k)}})$ of $H^{0}(\mathbb{P}E^*,\mathcal{O}_{\mathbb{P}E^*}(d) \otimes L^k)$ is almost balanced of order $q$ for any positive integer $q$.

\end{theorem}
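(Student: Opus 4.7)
The plan is to apply Proposition \ref{prop7} with $\epsilon = O(k^{-\infty})$, which is permitted by the hypothesis on the convergence rate of $H^{(k)} \otimes g^{\otimes(-k)}$, and then do the bookkeeping to massage the output into the almost balanced form. The first piece of bookkeeping is the pointwise normalization $\sum_{i} |\widehat{s_{i}^{(k)}}|^{2}_{\widehat{H^{(k)}}} \equiv 1$, which follows directly from the bundle-Bergman kernel identity $\sum s_{i}^{(k)} \otimes (s_{i}^{(k)})^{*_{H^{(k)}}} = I$: choosing an $H^{(k)}$-orthonormal local frame $\{e_{\alpha}\}$ of $\textrm{Sym}^d E \otimes L^{k}$ and writing $s_{i}^{(k)} = \sum_{\alpha} s_{i\alpha} e_{\alpha}$, the identity gives $\sum_{i} s_{i\alpha} \overline{s_{i\beta}} = \delta_{\alpha\beta}$, so at a point $[v] \in \mathbb{P}E_{x}^{*}$ one has $\sum_{i} |\widehat{s_{i}^{(k)}}([v])|^{2}_{\widehat{H^{(k)}}} = \sum_{\alpha} |v^{\otimes d}(e_{\alpha})|^{2}/|v \cdots v|^{2}_{H^{(k)}} = 1$, since the numerator is exactly the squared norm of $v^{\otimes d}$ in the basis dual to $\{e_{\alpha}\}$.

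Second, I would rescale to produce an $L^{2}$-orthonormal basis: setting $t_{i}^{(k)} := \sqrt{N_{k}/(RV)}\, s_{i}^{(k)}$, the second hypothesis becomes $\int \langle t_{i}^{(k)}, t_{j}^{(k)} \rangle_{H^{(k)}} \omega_{\infty} = \delta_{ij}$. Applying Proposition \ref{prop7} with $H := H^{(k)} \otimes g^{\otimes(-k)}$ (so that the $H_{k}$ of that proposition is $H^{(k)}$) and $\epsilon := ||H - \textrm{Sym}^d h_{\infty}||_{C^{2}} = O(k^{-\infty})$ yields
\begin{equation*}
\Big| \int \langle \widehat{t_{i}^{(k)}}, \widehat{t_{j}^{(k)}} \rangle_{\widehat{H^{(k)}}}\, dvol_{\widehat{H^{(k)}}} - C_{r,d}(d\mu + k) \delta_{ij} \Big| \leq C\epsilon (d\mu + k) = O(k^{-\infty}),
\end{equation*}
and after dividing by $N_{k}/(RV)$ to return to $s_{i}^{(k)}$,
\begin{equation*}
A^{(k)}_{ij} := \int \langle \widehat{s_{i}^{(k)}}, \widehat{s_{j}^{(k)}} \rangle_{\widehat{H^{(k)}}}\, dvol_{\widehat{H^{(k)}}} = \frac{RV\, C_{r,d}(d\mu + k)}{N_{k}} \delta_{ij} + O(k^{-\infty}).
\end{equation*}

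Third and finally, I would extract the trace-free part by setting $D^{(k)} := \frac{1}{N_{k}} \sum_{i} A^{(k)}_{ii}$; by the pointwise normalization from the first step this equals $V_{k}/N_{k}$ with $V_{k} := \int dvol_{\widehat{H^{(k)}}}$, and the diagonal entries of the estimate above show $D^{(k)} = RV C_{r,d}(d\mu + k)/N_{k} + O(k^{-\infty})$, which converges to a nonzero constant by Riemann-Roch since $N_{k} = Rk\deg L + O(1)$. The residual $M^{(k)}_{ij} := A^{(k)}_{ij} - D^{(k)} \delta_{ij}$ is then trace-free by construction with each entry of size $O(k^{-\infty})$, and since $N_{k}$ grows only polynomially, the Frobenius bound $||M^{(k)}||_{op} \leq ||M^{(k)}||_{F} \leq N_{k} \max_{ij}|M^{(k)}_{ij}| = O(k^{-\infty})$ delivers the required estimate. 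The substantive analytic content already lives inside Proposition \ref{prop7}; the remaining work is bookkeeping, the main items to watch being the rescaling constant $\sqrt{N_{k}/(RV)}$ and the fact that the shift from the ``naive'' $D^{(k)} = RV C_{r,d}(d\mu + k)/N_{k}$ to the trace-average $V_{k}/N_{k}$ is itself $O(k^{-\infty})$ and so does not disturb the error bound.
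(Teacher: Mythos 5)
Your proposal is correct and follows essentially the same route as the paper: apply Proposition \ref{prop7} with $\epsilon = O(k^{-\infty})$ to the $L^2$-orthonormal rescaling of the given basis, then bound the operator norm of the off-scalar part by summing entries and absorbing the polynomial growth of $N_k$. If anything, your bookkeeping is tidier than the paper's --- you keep the rescaling factor $\sqrt{N_k/(RV)}$ explicit (so your $D^{(k)} = RVC_{r,d}(d\mu+k)/N_k$ visibly converges) and you define $M^{(k)}$ as the deviation from the trace average so that it is genuinely trace-free, two points the paper's proof glosses over.
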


\begin{proof}

Let $p$ be a positive integer. There exists a constant $C$ independent of $k$ such that $$||H^{(k)} \otimes g^{\otimes (-k)}-\textrm{Sym}^d h_{\infty}||_{C^2(\textrm{Sym}^d h_{\infty})} \leq Ck^{-p}.$$ Fix $k \gg 0$. The basis $\{\sqrt{R^{-1}N_{k}}s_{1}^{(k)}, \dots , \sqrt{R^{-1}N_{k}}s_{N_{k}}^{(k)}\}$ is an orthonormal basis for $H^{0}(X,\textrm{Sym}^d E \otimes L^k)$ with respect to $L^2(H^{(k)} ,\omega_{\infty})$. Applying Proposition \ref{prop7} to $H=H^{(k)}\otimes g^{\otimes -k}$ implies that 
\begin{equation} \label{eq6}\Big|\int_{\mathbb{P}E^*}\langle\widehat{s_{i}^{(k)}},\widehat{s_{j}^{(k)}}\rangle_{\widehat{H^{(k)}}}dvol_{\widehat{H^{(k)}}}-C_{r,d}(d\mu+k)\delta_{ij}\Big|\leq Ck^{-p} (d\mu+k).\end{equation} 
Define \begin{align*}& M^{(k)}= \int_{\mathbb{P}E^*}\langle\widehat{s_{i}^{(k)}},\widehat{s_{j}^{(k)}}\rangle_{\widehat{H^{(k)}}}dvol_{\widehat{H^{(k)}}}-C_{r,d}(d\mu+k)\delta_{ij}, \\& D^{(k)}= C_{r,d} (d\mu+k ).  \end{align*}
We have $$\int_{\mathbb{P}E^*}\langle\widehat{s_{i}^{(k)}},\widehat{s_{j}^{(k)}}\rangle_{\widehat{H^{(k)}}}dvol_{\widehat{H^{(k)}}}=D^{(k)}I+M^{(k)}.$$
A simple calculation shows that $$ D^{(k)}\rightarrow C_{r,d}\,\,\,\,\,\,\, \textrm{as}\,\,\,\,
k\rightarrow \infty.$$
On the other hand, \eqref{eq6} implies  \begin{align*} ||M^{(k)}||_{op} & \leq \sum_{ij} |(M^{(k)})_{ij}| \leq Ck^{-p}(d\mu+k)N_{k}^2 \leq C^{\prime}k^{3-p}.     \end{align*} Note that $N_{k}=O(k)$ by Riemann-Roch theorem.
Therefore for any positive integer $q$, $||M^{(k)}||=O(k^{-q-1})$ which means that the sequence of Hermitian metrics $H^{(k)}$ on $\mathcal{O}_{\mathbb{E}^*}(d)\otimes L^k$ and ordered bases $\underline{s}^{(k)}=( \widehat{s_{1}^{(k)}},\dots , \widehat{s_{N_{k}}^{(k)}})$ of $H^{0}(\mathbb{P}E^*,\mathcal{O}_{\mathbb{P}E^*}(d) \otimes L^k)$ is almost balanced of order $q$ for any positive integer $q$.

\end{proof}

\begin{proof} [Proof of Theorem \ref{thm2b}.]

Fix a positive integer $a \geq 4$. Let $\omega_{\infty } $ be the k\"ahler form on $X$ with constant curvature. Since $E$ is a stable bundle, there exists a Hermitian metric $h_{\infty}$ on $E$ satisfies the Hermitian-Einstein equation \eqref{eq2}.
Therefore Theorem \ref{thm5} and Theorem \ref{thm6} imply that there exists a sequence of balanced metrics $H^{(k)}$ on 
$\textrm{Sym}^d E \otimes L^k$ such that
\begin{equation} \label{eq7}||H^{(k)} \otimes g^{\otimes (-k)}-\textrm{Sym}^d h_{\infty}||_{C^a(\textrm{Sym}^d h_{\infty})}=O(k^{-\infty}).\end{equation} 
By definition of balanced metrics on vector bundles (Definition \ref{def1n}), there exists a sequence of bases  $\{s_{i}^{(k)}\}_{i=1}^{N_{k}}$  for $H^0(X,\textrm{Sym}^d E
\otimes L^{\otimes k}) $ such that
\begin{align*}&\sum_{i=1}^{N_{k}} s_{i}^{(k)} \otimes (s_{i}^{(k)})^{*_{H^{(k)}}}=I_{\textrm{Sym}^d E \otimes L^k},\\&\int_{X} \langle
s_{i}^{(k)},s_{j}^{(k)}\rangle_{H^{(k)}} \omega_{\infty}=\frac{RVol(X,\omega_{\infty})}{N_{k}} \delta_{ij},\end{align*} where $R$ is the rank of $\textrm{Sym}^d E$. Hence \begin{equation} \label{eq8}\sum_{i=1}^{N_{k}} |\widehat{ s_{i}^{(k)} }|^2_{\widehat{H^{(k)}}}=1.\end{equation} 
Define $\omega_{0}= i\bar{\partial}\partial\log \widehat{H_{\infty}} $ and $\widetilde{\omega_{k}}= i\bar{\partial}\partial\log \widehat{H^{(k)}}$. Thus \eqref{eq7} implies
$$||\widetilde{\omega_{k}}-\omega_{k}||_{C^{a}(\omega_{k})}  \leq ||(\widetilde{\omega_{k}}-k\omega_{\infty})-\omega_{0}||_{C^{a}(\omega_{0})}=O(k^{-\infty}),$$
$$|\log \widehat{H^{(k)}}- \log (\widehat{H_{\infty}} \otimes g^{\otimes k})|_{C^{a+2}}= |\log (\widehat{H^{(k)}} \otimes g^{\otimes (-k)})- \log \widehat{H_{\infty}} |_{C^{a+2}}=O(k^{-\infty}). $$

On the other hand, Theorem \ref{thm7} and \eqref{eq8} imply that the sequence of Hermitian metrics $\widehat{H^{(k)}}$  on $\mathcal{O}_{\mathbb{P}E^*}(d)\otimes L^k$ and ordered bases $\underline{s}^{(k)}=( \widehat{s_{1}^{(k)}}, \dots  \widehat{s_{N_{k}}^{(k)}})$ for $H^{0}(\mathbb{P}E^*,\mathcal{O}_{\mathbb{P}E^*}(d)\otimes L^k)$ is almost balanced of order $q$ for any positive integer $q$. Since $\mathbb{P}E^*$ has no nontrivial
holomorphic vector fields, we can perturb these almost balanced metrics to get balanced metrics on $\mathcal{O}_{\mathbb{P}E^*}(d)\otimes \pi^* L^k$ for $k \gg 0$ (see \cite[Theorem 4.6]{S}).

\end{proof}

\end{document}